\newtheorem{theorem}{Theorem}
\newtheorem{lemma}[theorem]{Lemma}
\newtheorem{claim}{Claim}
\newtheorem{remark}[theorem]{Remark}
\newtheorem{observation}[theorem]{Observation}
\newtheorem{problem}{Problem}
\newcommand{\eps}{\varepsilon}
\newcommand{\prob}{\textrm{Pr}}
\newcommand{\Crln}{C^{(r,\l)}_n}
\newcommand{\rex}{\textrm{\upshape{rex}}}
\newcommand{\ex}{\textrm{\upshape{ex}}}
\newcommand{\GF}{\mathrm{GF}}
\newcommand{\rank}{\textrm{\upshape{rk}}\,}
\title{On the rank of higher inclusion matrices}
\author{Codru\unichar{355} Grosu, Yury Person, Tibor Szab\'o}
\thanks{The first author was supported by the Berlin Mathematical School (BMS), a graduate school in the 'Initiative of Excellence', 
and by the Deutsche Forschungsgemeinschaft within the research training group `Methods for Discrete Structures' (GRK 1408).}
\address{Institut f\"{u}r Mathematik, Freie Universit\"at Berlin, Arnimallee 3-5, D-14195 Berlin, Germany}
\email{grosu.codrut@gmail.com, person\,\textbar{}\,szabo@math.fu-berlin.de}
\begin{document}

\begin{abstract}
Let $r \geq s \geq 0$ be integers and $G$ be an $r$-graph. The higher inclusion matrix $M_s^r(G)$ is a $\{0,1\}$-matrix with rows indexed 
by the edges of $G$ and columns indexed by the subsets of $V(G)$ of size $s$: the entry corresponding to an edge $e$ and a subset $S$ is $1$ 
if $S \subseteq e$ and $0$ otherwise. Following a question of Frankl and Tokushige and a result of Keevash, we define the rank-extremal function 
$\rex(n, t, r, s)$ as the maximum number of edges of an $r$-graph $G$ having $\rank M_s^r(G) \leq \binom{n}{s} - t$. For $t$ at most linear in $n$ 
we determine this function as well as the extremal $r$-graphs. The special case $t=1$ answers a question of Keevash.
\end{abstract}
\maketitle

\section{\large{Introduction}}

Let $n \geq 1$ and suppose $\mathcal{F}$ is a collection of $k$-subsets of $[n]$. For any $p \geq 1$, 
we can define the lower $p$-shadow $\partial^p_l \mathcal{F}$ of $\mathcal{F}$ as the set of all $(k-p)$-subsets of 
$[n]$ which are contained in at least one element of $\mathcal{F}$. If $p=1$, we may drop the superscript. A 
fundamental result in extremal combinatorics, the Kruskal-Katona theorem, gives a sharp lower bound for the size of $\partial^p_l \mathcal{F}$. In order 
to state the theorem, we note that for positive integers $m$ and $k$ there are always unique integers $m_k > m_{k-1} > \ldots > m_j \geq j > 0$ such 
that $m = \binom{m_k}{k}+\binom{m_{k-1}}{k-1}+\ldots+\binom{m_j}{j}$. We further make the convention that $\binom{x}{y} = 0$ whenever $y < 0$.

\begin{theorem}[\cite{Kat68}, \cite{Kru63}]
\label{thm:katona}
Let $k \geq 1, p \geq 1$ and $m \geq 1$. For every $\mathcal{F} \subseteq \binom{[n]}{k}$ with $m = |\mathcal{F}|$ we have
\begin{equation}
\label{eq:kruskal}
|\partial^p_l \mathcal{F}| \geq \binom{m_k}{k-p}+\binom{m_{k-1}}{k-1-p}+\ldots+\binom{m_j}{j-p}.
\end{equation}
The inequality is best possible for every $k, p$ and $m \leq \binom{n}{k}$. Furthermore, 
if $m = \binom{m_k}{k}$, then equality holds in \eqref{eq:kruskal} if and only if $\mathcal{F} \simeq \binom{[m_k]}{k}$.
\end{theorem}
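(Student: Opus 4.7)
The plan is to prove the inequality via the classical compression (left-shift) technique combined with induction, and then to analyze the equality case separately. First, I would reduce to $p=1$: since $\partial^p_l = \partial_l \circ \partial^{p-1}_l$ and a direct calculation verifies that the colex initial segment of $\binom{\mathbb{N}}{k}$ of size $m = \sum_{i=j}^k \binom{m_i}{i}$ has $1$-shadow of size exactly $\sum_{i=j}^k \binom{m_i}{i-1}$, iterating the $p=1$ bound yields the general cascade inequality.

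For the $p=1$ case, introduce the left-compression operators $C_{ij}$ ($i<j$), which replace each $F \in \mathcal{F}$ satisfying $j \in F$, $i \notin F$ by $(F \setminus \{j\}) \cup \{i\}$ whenever the result is not already in $\mathcal{F}$. A short counting argument gives $|C_{ij}(\mathcal{F})| = |\mathcal{F}|$ and $|\partial_l C_{ij}(\mathcal{F})| \leq |\partial_l \mathcal{F}|$, so we may assume $\mathcal{F}$ is left-compressed. Then induct on $n+k$. Decompose $\mathcal{F} = \mathcal{F}_0 \sqcup \{\{n\} \cup E : E \in \mathcal{F}_1\}$ with $\mathcal{F}_0 \subseteq \binom{[n-1]}{k}$ and $\mathcal{F}_1 \subseteq \binom{[n-1]}{k-1}$. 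The fact that each $C_{in}$ acts trivially forces $\mathcal{F}_1 \subseteq \partial_l \mathcal{F}_0$ (provided $k < n$; the case $k=n$ is immediate), and consequently $|\partial_l \mathcal{F}| = |\partial_l \mathcal{F}_0| + |\partial_l \mathcal{F}_1|$. Apply the inductive hypothesis to each piece, noting that both $\mathcal{F}_0$ and $\mathcal{F}_1$ inherit left-compression on $[n-1]$.

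It then remains to check that for any split $|\mathcal{F}| = |\mathcal{F}_0| + |\mathcal{F}_1|$ the sum of the two cascade lower bounds is at least the cascade of $|\mathcal{F}|$ at level $k$; the cleanest route interprets both sides as shadows of colex initial segments and appeals to the optimality of the colex order at each level, reducing the claim to an identity on cascade representations. For the equality case when $m = \binom{m_k}{k}$, one traces equality backward: all compressions must act trivially, both inductive sub-bounds must be sharp, and the analysis at the top cascade term forces $\mathcal{F}_1 = \emptyset$ (otherwise a strict gap appears when combining the sub-cascades), which pins down $\mathcal{F} \simeq \binom{[m_k]}{k}$.

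I expect the main obstacle to be the cascade bookkeeping and the propagation of equality through the iterated compressions and inductive steps; this is the classically delicate portion of the Kruskal--Katona proof and is what makes the uniqueness statement in the second half of the theorem significantly harder than the inequality itself.
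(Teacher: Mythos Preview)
Your outline is the standard compression/shifting proof of Kruskal--Katona and, modulo the bookkeeping you flag, is a correct route to Theorem~\ref{thm:katona}. However, there is nothing in the paper to compare it against: Theorem~\ref{thm:katona} is stated as a classical result with references~\cite{Kat68,Kru63} and is \emph{not} proved in the paper. It is used as background (for the definition of $K(n,m,k,p)$ and to contextualize the rank-extremal question), and the only ``proof'' the paper offers is the remark after Theorem~\ref{thm:main} that their main theorem yields a linear-algebraic proof of the Kruskal--Katona bound in the very special regime of $(n-s)$-sets and families of size at most $c_0 n$ --- a far cry from the full statement.

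So your proposal is not wrong, but it is answering a question the paper does not ask. If you want to align with the paper, note that its actual contribution relative to Theorem~\ref{thm:katona} is the reverse implication: rather than proving Kruskal--Katona, it shows that the rank-extremal function $\rex(n,t,r,s)$ coincides with the Kruskal--Katona lower bound for small $t$, via the resilience result (Theorem~\ref{thm:robust}) and the inductive argument in Theorem~\ref{thm:mainNew}.
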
 

Theorem~\ref{thm:katona} also provides a sharp lower bound for the size of the upper $p$-shadow $\partial^p_u \mathcal{F}$, 
which is the set of all $(k+p)$-subsets of $[n]$ which contain at least one element of $\mathcal{F}$ 
(indeed, $\partial^p_u \mathcal{F} = (\partial^p_l \mathcal{F}^c)^c$, where $\mathcal{A}^{c}$ is the family obtained by taking the complements 
of all the sets in $\mathcal{A}$). 
For later use, we shall denote
\begin{equation*}
K(n, m, k, p) = \min\,\left\{|\partial_u^p \mathcal{F}| \: : \: \mathcal{F}
\subseteq \binom{[n]}{k}, \ |\mathcal{F}|=m \right\}.
\end{equation*}
Equivalently, as $|\partial_u^p \mathcal{F}| = |\partial^p_l \mathcal{F}^c|$, $K(n, m, k, p)$ denotes the minimum 
size of $|\partial_l^p \mathcal{F}|$, taken over all collections $\mathcal{F} \subseteq \binom{[n]}{n-k}$ of size $m$. 
By the Kruskal-Katona theorem, if $m = \binom{m_{n-k}}{n-k}+\binom{m_{n-k-1}}{n-k-1}+\ldots+\binom{m_j}{j}$ is the unique decomposition of $m$, then 
\begin{equation*}
K(n, m, k, p) = \binom{m_{n-k}}{n-k-p}+\binom{m_{n-k-1}}{n-k-1-p}+\ldots+\binom{m_j}{j-p}.
\end{equation*}

A weaker but simpler version of the Kruskal-Katona theorem is due to Lov\'asz (\cite{Lovasz79}, Exercise $13.31(b)$): 
if $|\mathcal{F}|=\binom{x}{k}$, for some real number $x$, then $|\partial^p_l \mathcal{F}|\geq \binom{x}{k-p}$, with 
equality if and only if $x$ is an integer and $\mathcal{F} \simeq \binom{[x]}{k}$. Several algebraic versions of Lov\'asz's result 
exist \cite{Chow2010}, \cite{Keevash08}; however, no analogue of the full Kruskal-Katona theorem in linear algebra has so far been obtained.

In this paper we will be mainly concerned with the algebraic generalization due to Keevash \cite{Keevash08}. To state it we will first introduce several definitions.

A hypergraph $G$ is an ordered pair of sets $(V, E)$, where $E \subseteq 2^V$, the elements of $V$ are called vertices, and the elements of $E$ are called edges. 
If $r \geq 1$ and all elements of $E$ have the same size $r$, we say $G$ is an $r$-uniform hypergraph, or simply an $r$-graph. Hence $2$-graphs correspond 
to the usual notion of undirected graphs. We let $v(G)$ denote the number of vertices, or the order of $G$, and $|G|$ the number of edges, or the size of $G$. 
We denote by $K_n^r$  the complete $r$-graph on $n$ vertices and vertex set $[n]$. For a subset $E'\subseteq E$  of the edges of an $r$-graph $G=(V,E)$ we shall 
denote by $G-E$ the hypergraph with vertex set $V$ and edge set $E\setminus E'$. 
Furthermore if $F \subseteq \binom{[n]}{r}$ we shall sometimes identify $F$ with the $r$-uniform hypergraph $([n], F)$. 
Finally, if $G = (V, E)$ is any $r$-graph we let $\overline{G}$ denote
its complement, i.e. the hypergraph 
$\left(V, \binom{V}{r} \setminus E\right)$. 

Now let $G$ be an $r$-graph and $s \leq r$. The higher inclusion matrix $M_s^r(G)$ is a $\{0,1\}$-matrix with rows indexed by the edges of $G$ and columns 
indexed by the subsets of $V(G)$ of size $s$: the entry 
corresponding to an edge $e$ and a subset $S$ is $1$ if $S \subseteq e$ and $0$ otherwise. Thus $M_1^2(G)$ is the usual incidence matrix of a graph $G$. 

It is an open problem of Frankl and Tokushige \cite{FranklToku91} to determine the minimum rank of $M_s^r(G)$ in terms of $|G|$. 
A theorem of Gottlieb \cite{Gottlieb} shows that the matrix $M_s^r(K_n^r)$ has full rank. 

\begin{theorem}[Gottlieb, \cite{Gottlieb}]
\label{thm:gottlieb}
For every $n\geq r\geq s\geq 0$ we have 
$$\rank M_s^r(K_n^r) = \min\left\{\binom{n}{r}, \binom{n}{s}\right\}.$$
\end{theorem}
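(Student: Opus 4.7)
The plan is first to reduce to the regime $n \geq r+s$ via a complementation bijection, and then to verify that $M_s^r(K_n^r)$ has full column rank. Sending an $r$-subset $e$ to $[n]\setminus e$ and an $s$-subset $S$ to $[n]\setminus S$ identifies (up to reordering of rows and columns) the matrix $M_s^r(K_n^r)$ with the transpose of $M_{n-r}^{n-s}(K_n^{n-s})$, since $S\subseteq e$ if and only if $[n]\setminus e\subseteq [n]\setminus S$. As transposition preserves rank, we may assume $\binom{n}{s}\leq\binom{n}{r}$, i.e.\ $n\geq r+s$, and aim to show that $\rank M_s^r(K_n^r)=\binom{n}{s}$, equivalently that $M_s^r$ is injective as a linear map.

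Next I would establish the chain identity
$$(r-s)!\,M_s^r \;=\; M_{r-1}^r \cdot M_{r-2}^{r-1}\cdots M_s^{s+1},$$
which follows by observing that for each pair $(S,e)$ with $|S|=s$, $|e|=r$ and $S\subseteq e$, the number of chains $S=S_0\subset S_1\subset\cdots\subset S_{r-s}=e$ with $|S_i|=s+i$ equals $(r-s)!$, the number of orderings of $e\setminus S$. This reduces the problem to the consecutive ``up-operators'' $U_k:=M_k^{k+1}$ and their transposes $D_k:=U_k^T$, for which a direct entrywise count on pairs of $k$-subsets yields the commutation relation
$$D_k U_k - U_{k-1} D_{k-1} \;=\; (n-2k)\,I\qquad\text{on }\mathbb{Q}^{\binom{[n]}{k}}.$$
This relation turns $\bigoplus_{k=0}^{n}\mathbb{Q}^{\binom{[n]}{k}}$ into a graded $\mathfrak{sl}_2$-module, which decomposes into irreducibles of highest weight $n-2j$ ($j=0,1,\dots,\lfloor n/2\rfloor$), each appearing with multiplicity $\binom{n}{j}-\binom{n}{j-1}$ and occupying levels $j,j+1,\dots,n-j$. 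The iterated operator $U_{r-1}U_{r-2}\cdots U_s$ sends level $s$ to level $r$, and on a spin-$j$ irreducible it is injective if and only if the irreducible has a nonzero piece at level $r$, i.e.\ $j\leq n-r$. Since only the spins $j=0,1,\dots,s$ appear at level $s$, the hypothesis $n\geq r+s$ yields $j\leq s\leq n-r$ for all such $j$, so $M_s^r$ is injective and the theorem follows.

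The main obstacle I expect is the careful verification of the commutation relation together with the multiplicities in the $\mathfrak{sl}_2$-decomposition. The commutation relation itself reduces to two cases (equal $k$-subsets and $k$-subsets differing in exactly one element), but extracting the decomposition into irreducibles requires either a modest amount of representation theory (via the Specht-module interpretation of $\mathbb{Q}^{\binom{[n]}{k}}$) or an auxiliary induction on $n$. As a backup one can try to induct on $n$ directly: if $M_s^r(K_n^r)v=0$ then for each $i\in [n]$ the restriction $v^{(-i)}$ to $s$-subsets of $[n]\setminus\{i\}$ lies in $\ker M_s^r(K_{n-1}^r)$, which vanishes by induction, forcing $v=0$; however the base case $n=r+s$, where the matrix is square, would still require a direct eigenvalue or determinant computation, which is essentially the same content as the $\mathfrak{sl}_2$ analysis.
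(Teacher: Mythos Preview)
The paper does not prove Theorem~\ref{thm:gottlieb}; it is quoted as Gottlieb's result with a reference to \cite{Gottlieb}, and is used throughout as a black box. So there is no in-paper proof to compare against.

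Your proposed argument is a correct and standard route. The complementation step reducing to $n\geq r+s$ is clean, the chain identity $(r-s)!\,M_s^r = M_{r-1}^r\cdots M_s^{s+1}$ is correctly justified, and the commutation relation $D_kU_k-U_{k-1}D_{k-1}=(n-2k)I$ checks out on both the diagonal and the off-diagonal entries. The $\mathfrak{sl}_2$-module structure that follows is exactly the Boolean-lattice/Lefschetz setup, and the injectivity conclusion for $U_{r-1}\cdots U_s$ on each irreducible summand at level $s$ under the hypothesis $n\geq r+s$ is the right endgame. One small expository point: with your conventions $U_k$ lowers the $H$-weight, so it plays the role of $F$ rather than $E$; this does not affect the argument, but you may want to say so explicitly.

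Regarding your stated obstacle: you do not actually need the full decomposition with multiplicities $\binom{n}{j}-\binom{n}{j-1}$. All you need is that any vector at level $s$ lies in a sum of irreducibles whose lowest levels $j$ satisfy $j\leq s$; this follows immediately from complete reducibility of finite-dimensional $\mathfrak{sl}_2$-modules over $\mathbb{Q}$, without ever computing a multiplicity. Your backup induction on $n$ is also viable, and your diagnosis of its difficulty is accurate: the step $n\to n-1$ is easy, but the square base case $n=r+s$ needs essentially the same spectral input (e.g.\ that $M_s^r(M_s^r)^T$ has only positive eigenvalues, which is again the $\mathfrak{sl}_2$ computation or Wilson's explicit eigenvalue formula).
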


Here and in the following the rank is considered only over the reals (in fact, we can take any field of characteristic $0$). 
Now the generalization of Lov\'asz's theorem due to Keevash is the following.

\begin{theorem}[\cite{Keevash08}]
\label{thm:keevash}
For every $r \geq s \geq 0$ there is a number $n_{r,s}$ so that if $G$ is an $r$-graph with $|G| = \binom{x}{r} \geq n_{r,s}$ then 
$\rank{M_s^r(G)} \geq \binom{x}{s}.$ Furthermore, if $r > s > 0$ then equality holds if and only if $x$ is an integer and $G\simeq K^r_x$.
\end{theorem}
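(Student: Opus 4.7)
My plan is to proceed by induction on $r$, dispatching the trivial cases $s = 0$ (rank $1$) and $s = r$ (rank $|G|$, since $M_r^r(G)$ is an identity matrix) immediately and focusing on $r > s > 0$. The central structural tool is a block decomposition of $M_s^r(G)$ relative to a chosen vertex $v \in V(G)$: ordering edges so that those avoiding $v$ come first, and likewise for $s$-subsets, one obtains
\begin{equation*}
M_s^r(G) \;=\; \begin{pmatrix} M_s^r(G-v) & 0 \\ M_s^{r-1}(G_v) & M_{s-1}^{r-1}(G_v) \end{pmatrix},
\end{equation*}
where $G_v$ is the $(r-1)$-uniform link of $v$. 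The upper-right zero block is forced because an $s$-subset containing $v$ cannot sit inside an $r$-edge missing $v$, and an elementary column-span argument then gives
\begin{equation*}
\rank M_s^r(G) \;\geq\; \rank M_s^r(G-v) + \rank M_{s-1}^{r-1}(G_v).
\end{equation*}

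I would then close the argument by strong induction on $|G|$ for the first summand and by the outer induction on $r$ for the second. Writing $d(v) = \binom{y}{r-1}$ and $|G| - d(v) = \binom{z}{r}$ for suitable reals $y, z$, this produces $\rank M_s^r(G) \geq \binom{z}{s} + \binom{y}{s-1}$ whenever the link and the complement are large enough to invoke the respective inductive hypotheses. The vertex $v$ must be chosen carefully. Since $|G| = \binom{x}{r}$ forces $n = v(G) \geq x$, an averaging step yields some non-isolated vertex with $d(v) \leq \binom{x-1}{r-1}$, i.e., $y \leq x-1$. For $y$ in this range, the desired bound then reduces to the analytic inequality
\begin{equation*}
\binom{y}{s-1} + \binom{z}{s} \;\geq\; \binom{x}{s} \quad \text{whenever} \quad \binom{y}{r-1} + \binom{z}{r} = \binom{x}{r},
\end{equation*}
with equality at $y = z = x-1$ by Pascal's identity; I would prove it by a convexity/calculus check on the implicit constraint curve in the $(y,z)$-plane.

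The equality analysis traces tightness through every step: block additivity must be sharp; the inductive equality characterization forces $G - v \simeq K_{x-1}^r$ and $G_v \simeq K_{x-1}^{r-1}$ at the chosen $v$; and the analytic lemma pins $(y,z) = (x-1, x-1)$. Gluing these pieces together then yields $G \simeq K_x^r$. The principal obstacle I foresee is the sparse regime in which every non-isolated vertex has degree below the threshold $n_{r-1, s-1}$ needed to invoke the inductive hypothesis on $G_v$; this case likely requires a separate treatment, exploiting the fact that $M_s^r$ is block-diagonal across vertex-disjoint components of $G$, reducing the problem to applying Gottlieb's theorem on each small component and combining via a Kruskal-Katona-type convexity bound. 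A secondary technical issue is arranging the thresholds $n_{r,s}$ consistently throughout the double induction, so that every recursive invocation lands within its own hypothesis.
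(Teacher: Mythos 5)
Your block decomposition $M_s^r(G) = \begin{pmatrix} M_s^r(G-v) & 0 \\ \ast & M_{s-1}^{r-1}(G_v) \end{pmatrix}$ and the resulting rank inequality are precisely Keevash's Lemma~12 (reproduced as Lemma~\ref{lem:ajutineq} in this paper), and the vertex-deletion double induction is indeed the skeleton of his argument; so the structural idea is aligned with the paper's cited proof. There is, however, a genuine gap: Keevash's proof hinges on what appears here as Lemma~\ref{lem:keevash} -- if $F\subseteq\binom{[n]}{r}$ has $|F|<\binom{r}{s}^{-1}\binom{n}{r-s}$ then $M_s^r(K_n^r-F)$ already has full rank $\binom{n}{s}$ -- and your plan neither invokes it nor supplies a substitute. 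That lemma is what anchors the recursion: the inner induction you propose (strong induction on $|G|$ via $G-v$) bottoms out in a window where $|G-v|\geq\binom{x-1}{r}$ may drop below $n_{r,s}$, and, more importantly, when all the non-isolated vertices of $G$ live on a set of size close to $x$, the hypergraph is a nearly-complete $K_n^r$ on few vertices and deleting one vertex does not make progress -- this is exactly the configuration Lemma~\ref{lem:keevash} is designed to dispatch.

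Your proposed workaround for the problematic sparse regime also does not hold up. You suggest that if every vertex has degree below $n_{r-1,s-1}$ one can decompose $M_s^r$ block-diagonally across vertex-disjoint components and finish with Gottlieb's theorem, but a connected $r$-graph (for instance, an $(r-1)$-tight cycle) has all vertices of degree $O(1)$ and admits no such decomposition; the obstacle there must instead be handled, as Keevash does, by combining the rank inequality with the near-complete case via Lemma~\ref{lem:keevash}. Secondarily, the analytic step
\begin{equation*}
\binom{y}{s-1}+\binom{z}{s}\;\geq\;\binom{x}{s}\quad\text{subject to}\quad\binom{y}{r-1}+\binom{z}{r}=\binom{x}{r},\ r-1\leq y\leq x-1\leq z,
\end{equation*}
is plausible but far from a routine ``convexity check'': the constraint curve has (near-)equality at both $y=z=x-1$ and at $y$ near $r-1$, $z$ near $x$, so one must establish global behaviour across this whole range, and the equality case of the analytic lemma must also dovetail with the inductive uniqueness statement; for the latter you additionally need to argue that the $(x-1)$-cliques coming from $G-v$ and from $G_v$ sit on the same ground set, which does not follow merely from the two isomorphisms. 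In short: the high-level strategy matches the paper's, but the stability lemma (Lemma~\ref{lem:keevash}) that makes the argument close is absent, and your patch for the bounded-degree regime is incorrect.
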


Theorem~\ref{thm:keevash} implies Lov\'asz's result, as the rank of $M_s^r(G)$ is at most the number of non-zero columns, which is the size 
of the lower $(r-s)$-shadow of $E(G)$. An important step in the proof of Theorem~\ref{thm:keevash} was provided by the following lemma.

\begin{lemma}[\cite{Keevash08}]
\label{lem:keevash}
Suppose $n \geq 2r+s$. If $F \subseteq \binom{[n]}{r}$ with $|F| < \binom{r}{s}^{-1}\binom{n}{r-s}$ then $\rank M_s^r(K_n^r - F) = \binom{n}{s}$.
\end{lemma}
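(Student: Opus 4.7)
The plan is to prove the lemma by induction, via a single-vertex reduction. Pick a vertex $v \in [n]$ maximizing $a_v := \#\{e \in F : v \in e\}$; by averaging $a_v \geq r|F|/n$. Given any $c \in \mathbb{R}^{\binom{[n]}{s}}$ with $M_s^r(K_n^r - F)\,c = 0$, I split $c = c_v + c_{-v}$ according to whether the indexing $s$-set contains $v$, and aim to show both summands vanish.

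The crucial observation is that in any row $e$ of the matrix with $v \notin e$, every column indexed by an $s$-set $S$ with $v \in S$ has entry $0$ (since $v \in S \subseteq e$ would force $v \in e$). Consequently, the restriction of $M_s^r(K_n^r - F)\,c = 0$ to rows $e \not\ni v$ reads exactly $M_s^r(G - v)\, c_{-v} = 0$, where $G - v = K_{n-1}^r - F_{-v}$ with $F_{-v} := \{e \in F : v \notin e\}$. A short manipulation based on $\binom{n}{r-s} = \frac{n}{n-r+s}\binom{n-1}{r-s}$ gives
\[
|F_{-v}| \leq \frac{n-r}{n}|F| < \frac{n-r}{n-r+s}\binom{r}{s}^{-1}\binom{n-1}{r-s} < \binom{r}{s}^{-1}\binom{n-1}{r-s}
\]
(the last inequality uses $s \geq 1$), so the inductive hypothesis applied to this smaller instance forces $c_{-v} = 0$. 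With $c_{-v} = 0$ in hand, the remaining equations (rows $e \ni v$) translate under the bijections $S \leftrightarrow S \setminus\{v\}$ and $e \leftrightarrow e \setminus \{v\}$ to the system $M_{s-1}^{r-1}(G_v)\,\tilde c_v = 0$, where $G_v = K_{n-1}^{r-1} - F_v$ is the link of $v$ in $G$. Here $|F_v| \leq |F| < \binom{r-1}{s-1}^{-1}\binom{n-1}{r-s}$ (the latter inequality simplifying via $\binom{r}{s}/\binom{r-1}{s-1} = r/s$ to $n \geq r$, which is immediate), so the inductive hypothesis again forces $\tilde c_v = 0$, i.e., $c_v = 0$, and hence $c = 0$.

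This is a double induction, outer on $s$ (with base $s = 0$, where $M_0^r(G)$ is a single column of $1$'s, nonzero since $|F| < \binom{n}{r}$) and inner on $n$ (with Gottlieb's theorem, Theorem~\ref{thm:gottlieb}, covering the degenerate base $|F| = 0$). Since the link-reduction strictly decreases $s$ and the deletion-reduction strictly decreases $n$, both recursive calls do simplify the problem, and the two numerical verifications above show that the hypothesis on $|F|$ passes cleanly to each sub-instance. The main obstacle will be the base case of the inner $n$-induction at $n = 2r + s$: the deletion step then lands at $n - 1 = 2r + s - 1$, just outside the stated range of the lemma. I anticipate handling this either by an independent direct argument at $n = 2r + s$ (exploiting the gap between $\binom{n}{r}$ and $\binom{n}{s}$ together with Gottlieb's theorem), or by establishing the conclusion for a slightly larger range of $n$ than stated, so that the induction on $n$ bottoms out well before reaching the boundary $n = 2r+s$.
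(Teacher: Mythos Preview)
The paper does not prove this lemma itself; it is quoted from \cite{Keevash08}. The paper's own contribution in this direction is the stronger Theorem~\ref{thm:robust}, whose proof is non-inductive and quite different from yours: one fixes a nontrivial dependence sequence, finds a large independent set in the associated $s$-graph $G'$ via an averaging argument (Lemma~\ref{lem:stable_set}), and then counts $1$-cliques and semistars (Observations~\ref{obs:s} and~\ref{obs:semistar}) to force $|F|$ to be large.

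Your inductive scheme is natural---the decomposition into rows with and without $v$ is precisely Lemma~\ref{lem:ajutineq}---and both numerical checks are correct. But the gap you flag at the base of the $n$-induction is real, and neither of your proposed remedies works as stated. A ``direct argument at $n=2r+s$ exploiting the gap between $\binom{n}{r}$ and $\binom{n}{s}$ together with Gottlieb's theorem'' does not go through: deleting $|F|$ rows from a rank-$\binom{n}{s}$ matrix can certainly reduce the rank, however many rows remain, so the surplus of rows over columns buys nothing by itself. As for extending the conclusion to smaller $n$ so that the induction can continue, the conclusion is genuinely false once $n$ is small enough: already for $r=2$, $s=1$, $n=r+s=3$ one has $\binom{r}{s}^{-1}\binom{n}{r-s}=3/2$, yet removing a single edge from $K_3$ drops $\rank M_1^2$ from $3$ to $2$. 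Hence you cannot simply push the induction down to $n=r+s$; you would have to determine and separately establish the precise threshold in $n$ above which the statement holds, which is work of the same order as the lemma itself. Until one of these routes is actually carried out, the argument is incomplete.
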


Keevash further asked (\cite{Addendum}, Question 2 (iii)) whether Lemma \ref{lem:keevash} remains true under the assumption $|F| < \binom{n-s}{r-s}$, 
at least for large $n$. This would be best possible, as removing all edges of $K_n^r$ containing some fixed $s$-set creates a $0$-column and hence reduces 
the rank by at least $1$.

Now let $r > s \geq 1, n \geq r+s$ and $0 \leq t \leq \binom{n}{s}$. In view of the above and the question of Frankl and Tokushige \cite{FranklToku91}, 
we define the \textit{rank-extremal function} 
\begin{equation*}
\rex(n, t, r, s) := \max\left\{|G| : G \textrm{ is an $r$-graph on $[n]$ and }\rank M_s^r(G) \leq \binom{n}{s} - t\right\}.
\end{equation*}
This notion is analogous to the notion of Tur\'an number of hypergraphs (see \cite{Keevash11} for a survey), where the maximum 
number of edges is sought when the clique number is bounded. Here, instead, the rank of the higher-inclusion matrix is bounded.
 
In the current paper we investigate this function for small $t$ and further determine the \textit{rank-extremal} $r$-graphs. As in the case of $t=1$, 
there is a natural construction which provides a lower bound for given $t$: fixing $t$ subsets of $[n]$ of size $s$ and removing all edges of $K_n^r$ 
containing at least one of them yields an $r$-graph $G$ with $\rank M_s^r(G) \leq \binom{n}{s} - t$, as $M_s^r(G)$ has at least $t$ zero columns. 
Minimizing over all choices of the $t$ $s$-subsets, we see that 
\begin{equation*}
\rex(n, t, r, s) \geq \binom{n}{r}-K(n, t, s, r-s),
\end{equation*}
where recall that $K(n, t, s, r-s)$ is the smallest size
of the upper $(r-s)$-shadow of a collection of $s$-subsets of $[n]$ of size $t$, provided by the Kruskal-Katona theorem. 

The content of our main theorem is that equality holds here for $t$ at most linear in $n$.

\section{\large{New Results}} 
\label{sec:statements}

We call a collection of $s$-sets an \textit{$s$-star configuration} if their common intersection has size at 
least $s-1$. Clearly for any $t \leq n - s + 1$, there exists a 
unique (up to isomorphism) $s$-star configuration $\mathcal{S}_{n,t,s}$ of size $t$ on ground set $[n]$. 
It is easy to see that $|\partial_u^{r-s} \mathcal{S}_{n,t,s}| = \binom{n-s+1}{r-s+1} - \binom{n-s+1-t}{r-s+1}$. 
Removing all edges of $K_n^r$ containing some element of $\mathcal{S}_{n,t,s}$ gives an $r$-graph $G(n, t, r, s)$ with $\rank M_s^r(G(n, t, r, s)) \leq \binom{n}{s} - t$, 
as there are at least $t$ zero columns. Then we have
\begin{equation}
\label{eq:oper}
\rex(n, t, r, s) \geq \binom{n}{r}-K(n, t, s, r-s) \geq e(G(n, t, r, s)) = \binom{n}{r} - \binom{n-s+1}{r-s+1} + \binom{n-s+1-t}{r-s+1}. 
\end{equation}

Our main result is the following.

\begin{theorem}
\label{thm:main}
Let $r > s \geq 1$. Then there exist positive constants $c_0 := c_0(r, s)$ and $n_0 := n_0(r, s)$ such that the following holds. 
Let $n \geq n_0$ and $1 \leq t \leq c_0n$ be integers. Then
\begin{equation}
\label{eq:main}
\rex(n, t, r, s) = \binom{n}{r} - \binom{n-s+1}{r-s+1} + \binom{n-s+1-t}{r-s+1}.
\end{equation}
Furthermore, $G(n, t, r, s)$ is the unique rank-extremal $r$-graph up to isomorphism.
\end{theorem}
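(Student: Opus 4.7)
The plan is to translate the rank condition into a kernel statement and then prove the matching upper bound in two stages: first establish the $t=1$ bound (which answers Keevash's question), and then extend to general $t$ via a structural analysis of extremal kernels.

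\textit{Setup.} Write $F := \binom{[n]}{r} \setminus E(G)$ and $V := \ker M_s^r(G)$. The hypothesis $\rank M_s^r(G) \leq \binom{n}{s} - t$ is equivalent to $\dim V \geq t$. For each $v \in V$, the linear form $L_v(e) := \sum_{S \subseteq e,\,|S|=s} v_S$ vanishes on every $e \in E(G)$, so its support $D_v := \{e \in \binom{[n]}{r} : L_v(e) \neq 0\}$ lies in $F$. Hence $F \supseteq \bigcup_{v \in V \setminus \{0\}} D_v$, and our task is to lower-bound this union by the right-hand side of \eqref{eq:main}.

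\textit{Step 1 ($t=1$, resolving Keevash's question).} The cornerstone is the sharp bound: for every nonzero $v \in \mathbb{R}^{\binom{[n]}{s}}$ and $n$ sufficiently large,
\[
|D_v| \geq \binom{n-s}{r-s},
\]
with equality if and only if $v = c\mathbf{1}_{S_0}$ for a single $s$-set $S_0$. This strengthens Lemma~\ref{lem:keevash} by removing the factor $\binom{r}{s}$. I would prove it by induction on $s$, using the link identity
\[
L_v(e' \cup \{y\}) = L_v(e') + L_{v^{(y)}}(e') \qquad (y \notin e'),
\]
where $v^{(y)}_T := v_{T \cup \{y\}}$ is defined on $(s-1)$-sets of $[n] \setminus \{y\}$. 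For $y$ in the union of $s$-sets supporting $v$, we have $v^{(y)} \neq 0$, and the inductive hypothesis applied to $v^{(y)}$ provides at least $\binom{n-s}{r-s}$ choices of $e' \in \binom{[n]\setminus\{y\}}{r-1}$ with $L_{v^{(y)}}(e') \neq 0$. An averaging argument over such $y$ (to control possible cancellations $L_v(e') + L_{v^{(y)}}(e') = 0$) converts this into the required lower bound on $|D_v|$, and tracking equality at each recursion step pins the extremal $v$ to single-$s$-set support.

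\textit{Step 2 (from $t=1$ to general $t$).} Define $\mathcal{T} := \{S \in \binom{[n]}{s} : \mathbf{1}_S \in V\}$; equivalently $\mathcal{T}$ consists of those $s$-sets $S$ for which every $r$-set containing $S$ lies in $F$. Then $F \supseteq \partial_u^{r-s}(\mathcal{T})$, and Kruskal--Katona gives $|\partial_u^{r-s}(\mathcal{T})| \geq K(n, |\mathcal{T}|, s, r-s)$, with the star configuration as the unique minimizer up to isomorphism when $|\mathcal{T}| \leq n-s+1$. The goal is to show $|\mathcal{T}| \geq t$, which yields $|F| \geq K(n, t, s, r-s) = \binom{n-s+1}{r-s+1} - \binom{n-s+1-t}{r-s+1}$. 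Since $\dim V \geq t$ by itself does not force $|\mathcal{T}| \geq t$ (vectors like $\mathbf{1}_{S_1} - \mathbf{1}_{S_2}$ can lie in $V$ while neither $\mathbf{1}_{S_i}$ does), the mechanism is a stability version of Step 1: any $v \in V$ not expressible as a combination of indicators from $\mathcal{T}$ should force $\Omega(n^{r-s})$ extra $r$-sets into $\bigcup D_v$. Quantifying this penalty and combining it with the baseline star-shadow count forces $|\mathcal{T}| \geq t$ in the range $t \leq c_0 n$; the linear cutoff arises precisely from comparing the $\Theta(tn^{r-s})$ star shadow to the $\Theta(n^{r-s})$ penalty per independent deviation. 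The main obstacle is Step 1 --- tightening Keevash's estimate by the factor $\binom{r}{s}$ and identifying the single-support extremals --- together with the stability refinement needed in Step 2. Uniqueness then follows: the extremal $V$ must have $\dim V = t$ with $|\mathcal{T}| = t$, and Kruskal--Katona equality (valid in the regime $t \leq n-s+1$ guaranteed by $t \leq c_0 n$) forces $\mathcal{T}$ to be a star configuration, so that $G \simeq G(n, t, r, s)$.
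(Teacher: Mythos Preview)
Your strategy is genuinely different from the paper's and has real gaps at the decisive points. The paper does not work with kernel vectors $v$ and their supports $D_v$, nor with the set $\mathcal{T}$. Instead it first proves a robustness theorem (Theorem~\ref{thm:robust}): if $|F|$ is small \emph{and} every $s$-set lies in at most $\alpha\binom{n-s}{r-s}$ members of $F$, then the rank is full. This is established by locating a large independent set in the $s$-graph associated to a hypothetical nonzero dependence (Lemma~\ref{lem:stable_set}) and then double-counting ``semistars'' to force $|F|$ too large. The main theorem is then deduced by a double induction---on $s$, and on a parameter $\Lambda(F,s)$ counting how many maximum-degree vertex deletions are needed before Theorem~\ref{thm:robust} applies---driven at each step by the inequality $\rank M_s^r(G) \geq \rank M_s^r(G-x) + \rank M_{s-1}^{r-1}(G/x)$ (Lemma~\ref{lem:ajutineq}). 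Uniqueness emerges from the equality analysis inside this induction, not from Kruskal--Katona.

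In your Step~1 the link identity is correct, but the ``averaging argument over $y$ to control cancellations'' is exactly where the difficulty lies, and you give no mechanism for it: for a fixed $y$, the cancellation $L_v(e') = -L_{v^{(y)}}(e')$ could in principle occur for every $e' \in D_{v^{(y)}}$, so nothing forces $e' \cup \{y\} \in D_v$, and averaging over the vertices $y$ in the support does not obviously recover the sharp constant $\binom{n-s}{r-s}$ rather than something weaker by a factor depending on $r,s$. This is precisely the work that the robustness theorem plus the $\Lambda$-induction carries in the paper. In Step~2, the stability claim---that a vector $v \in V$ outside $\operatorname{span}\{\mathbf{1}_S : S \in \mathcal{T}\}$ contributes $\Omega(n^{r-s})$ elements to $F$ \emph{outside} $\partial_u^{r-s}\mathcal{T}$---is asserted without argument, and it is unclear how to rule out that such a $v$ has $D_v$ almost entirely inside $\partial_u^{r-s}\mathcal{T}$. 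More seriously, your uniqueness conclusion assumes that equality in Kruskal--Katona forces $\mathcal{T}$ to be a star. Kruskal--Katona extremal families are not unique in general; you would need a full characterization of the equality cases for the cascade $t = \binom{n-s}{n-s} + \cdots + \binom{n-s-t+1}{n-s-t+1}$ and to verify only the star survives. The paper in fact goes the other way: it remarks that Theorem~\ref{thm:main} \emph{yields} a linear-algebraic proof of Kruskal--Katona (with its extremal configuration) in this range, so invoking Kruskal--Katona uniqueness to prove Theorem~\ref{thm:main} is, at best, circular.
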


In particular, equality holds everywhere in \eqref{eq:oper}, and so $K(n, t, s, r-s) = \binom{n-s+1}{r-s+1} - \binom{n-s+1-t}{r-s+1}$. 
Now note that for any $t \leq n-s$ we have
\begin{equation}
\label{eq:tDecomp}
t = \binom{n-s}{n-s}+\ldots+\binom{n-s-t+1}{n-s-t+1},
\end{equation}
and so by unicity, the right-hand side of \eqref{eq:kruskal} (for $k:=n-s$, $p:=r-s$ and $m:=t$) 
is exactly $\binom{n-s}{n-r}+\ldots+\binom{n-s-t+1}{n-r-t+1} = \binom{n-s+1}{r-s+1} - \binom{n-s-t+1}{r-s+1}$. 
Thus we have obtained a linear-algebraic proof of Theorem~\ref{thm:katona} for the lower $(r-s)$-shadow of a collection of $(n-s)$-sets of size at most $c_0n$. 

By taking $t:=1$ in Theorem \ref{thm:main}, we obtain $\rex(n,1,r,s)= \tbinom{n}{r} - \tbinom{n-s}{r-s}$ which gives a positive answer to Keevash's question.

The main step in the proof of Theorem~\ref{thm:main}, which might be interesting in its own right, is an extension of Gottlieb's Theorem and
concerns the robustness or resilience of the rank. It states that if a limited number of somewhat uniformly distributed $r$-sets are removed from the 
complete $r$-graph then the rank of the $s$-inclusion matrix does not decrease. 

\begin{theorem}
\label{thm:robust}
For all $r \geq s \geq 0$ there exist positive constants $\eps :=\eps(r, s), \alpha := \alpha(r, s)$ and $n_1 := n_1(r, s)$ such that the following holds. 
If $F\subseteq \tbinom{[n]}{r}$ is a family of at most $\eps n \tbinom{n-s}{r-s}$ $r$-sets on $n \geq n_1$ vertices, with the property that every $s$-set 
$S \in \tbinom{[n]}{s}$ is contained in less than $\alpha \tbinom{n-s}{r-s}$ elements of $F$, then 
$\rank M_s^r(K_n^r - F) = \tbinom{n}{s}$.
\end{theorem}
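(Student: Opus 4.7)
The plan is to show that the columns of $M := M_s^r(K_n^r - F)$ are linearly independent via a quantitative norm comparison, extending the spectral content of Gottlieb's Theorem~\ref{thm:gottlieb}. Suppose $c : \binom{[n]}{s} \to \mathbb{R}$ lies in the kernel of $M$, and form the lifted function $g : \binom{[n]}{r} \to \mathbb{R}$ defined by $g(e) = \sum_{S \in \binom{e}{s}} c_S$. The kernel condition reads exactly $g \equiv 0$ on $\binom{[n]}{r} \setminus F$. The strategy is to estimate $\|g\|_2^2$ from above using the sparsity hypothesis on $F$, and from below using spectral information about the Gram matrix of the full inclusion matrix of $K_n^r$, in such a way that the two bounds are simultaneously compatible only when $c = 0$.

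For the upper bound, Cauchy--Schwarz gives $g(e)^2 \leq \binom{r}{s} \sum_{S \in \binom{e}{s}} c_S^2$. Summing only over $e \in F$ (since $g$ vanishes elsewhere) and switching the order of summation yields
\begin{equation*}
\|g\|_2^2 \;=\; \sum_{e \in F} g(e)^2 \;\leq\; \binom{r}{s} \sum_{S \in \binom{[n]}{s}} d_F(S)\, c_S^2 \;<\; \binom{r}{s}\, \alpha \binom{n-s}{r-s}\, \|c\|_2^2,
\end{equation*}
where $d_F(S) := |\{e \in F : S \subseteq e\}|$ and we applied the degree hypothesis $d_F(S) < \alpha \binom{n-s}{r-s}$.

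For the lower bound, writing $g = M_s^r(K_n^r)\, c$ as a column vector gives $\|g\|_2^2 = c^T W_s\, c$, where $W_s := M_s^r(K_n^r)^T M_s^r(K_n^r)$ is the positive semidefinite Gram matrix whose entries are $W_s[S, S'] = \binom{n - |S \cup S'|}{r - |S \cup S'|}$. Since $W_s$ commutes with the natural action of the symmetric group $S_n$ on $\binom{[n]}{s}$, its eigenspaces are exactly the $s+1$ harmonic levels of the Johnson scheme, and the corresponding eigenvalues $\lambda_0(n), \ldots, \lambda_s(n)$ are polynomials in $n$ of degree $r-s$. Gottlieb's Theorem~\ref{thm:gottlieb} forces each such polynomial to have strictly positive leading coefficient, so there exist constants $\gamma(r,s) > 0$ and $n_1(r,s)$ with $\lambda_{\min}(W_s) \geq \gamma(r,s) \binom{n-s}{r-s}$ for all $n \geq n_1(r,s)$. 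This yields $\|g\|_2^2 \geq \gamma(r,s)\binom{n-s}{r-s}\,\|c\|_2^2$.

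Combining the two estimates and choosing $\alpha < \gamma(r,s)/\binom{r}{s}$ forces $\|c\|_2^2 = 0$, hence $c = 0$, so $M$ has full column rank $\binom{n}{s}$ as required. The main obstacle is the lower bound of the previous paragraph: while qualitative positivity of $\lambda_{\min}(W_s)$ comes for free from Gottlieb's theorem, extracting the sharp asymptotic $\Omega\bigl(\binom{n-s}{r-s}\bigr)$ with a concrete constant $\gamma(r,s)$ requires an honest computation in the Johnson scheme, most naturally by diagonalising $W_s$ against the harmonic decomposition and evaluating the resulting Eberlein/Hahn polynomials --- small cases suggest the minimum occurs on the top level $i = s$ with leading coefficient $1$. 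We remark that this route uses only the degree hypothesis on $F$; the size bound $|F| \leq \varepsilon n \binom{n-s}{r-s}$ is not needed for the argument itself, and is presumably retained either because the authors take a different (perhaps inductive) route bootstrapping from Lemma~\ref{lem:keevash}, or because it is convenient in the downstream deduction of Theorem~\ref{thm:main}.
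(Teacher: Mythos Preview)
Your spectral approach is correct and genuinely different from the paper's route. The paper argues combinatorially: it fixes a non-trivial dependence sequence, forms the associated $s$-graph $G'$ on its support, uses an averaging argument (Lemma~\ref{lem:stable_set}) together with Gottlieb's theorem to locate a large independent set $A$ in $G'$, and then shows that edges of $G'$ incident to $A$ force many $1$-cliques or $(r+s-2)$-semistars, each of which (by Observations~\ref{obs:s} and~\ref{obs:semistar}) must contain an element of $F$. A final double count makes $|F|$ too large. Both hypotheses are genuinely used in that argument: the degree bound kills the case of few $x$-leaves, the global size bound the case of many.

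Your norm-comparison sidesteps all of this structure. The eigenvalue claim you flag as the obstacle is indeed true: writing $W_s = \sum_{j} \binom{n-s-j}{r-s-j} A_j$ in the Johnson association scheme and inserting the Eberlein formula for the eigenvalues of $A_j$, one finds that the eigenvalue of $W_s$ on the $i$-th harmonic level has leading term $\binom{r-i}{s-i}\, n^{r-s}/(r-s)!$, so the minimum occurs at $i=s$ with $\lambda_s(n)\sim \binom{n-s}{r-s}$ and any $\gamma<1$ works for large $n$. One small caveat: your appeal to Gottlieb to deduce positivity of the leading coefficient is not self-sufficient, since a polynomial of degree strictly less than $r-s$ could also be positive for all $n\ge r$; it is the computation itself that shows the degree is exactly $r-s$.

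Your observation that the size hypothesis $|F|\le \eps n\binom{n-s}{r-s}$ is unused is correct and worth emphasising: your argument needs only the $s$-degree bound, and hence yields $m(n,r,s)=\Omega\bigl(\binom{n-s}{r-s}\bigr)$ in the local-resilience problem raised in the paper's concluding remarks, improving on the bound $\Omega(n^{\,r+1-2s})$ the authors extract there from their version of Theorem~\ref{thm:robust}. The size bound is, however, exactly what the paper's combinatorial proof needs, and it is also what is naturally available in the downstream application (the base case $\Lambda=0$ of Claim~\ref{clm:mainLoop} in the proof of Theorem~\ref{thm:mainNew}).
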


Theorem~\ref{thm:main} implies that for $t$ up to some small constant times $n$, we have $\binom{n}{r}-\rex(n, t, r, s) = K(n, t, s, r-s)$. One may wonder what is the maximum value $t_{\textrm{max}}$ up to which this equality holds, provided $n$ is large enough. Let us define 
\begin{equation*}
t_{\textrm{max}} (n,r,s) := \max\left\{1 \leq t \leq \binom{n}{s} : \textrm{ for any $t' \leq t$,} \binom{n}{r}-\rex(n, t', r, s) = K(n, t', s, r-s)\right\}.
\end{equation*} 
Our next theorem shows that
our bound on $t$ in Theorem~\ref{thm:main} is best possible
up to a constant factor.
\begin{theorem}
\label{thm:sharp}
For arbitrary  integers $r > s \geq 1$ 
there exists a positive constant $n_0' := n_0'(r, s)$ such that 
for every $n \geq n_0'$, $t_{\textrm{max}}(n,r,s) < n - r - 1$.
\end{theorem}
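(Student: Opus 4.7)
The plan is to prove Theorem~\ref{thm:sharp} by exhibiting, for $t = n-r-1$, an $r$-graph $G^*$ on $[n]$ with $\rank M_s^r(G^*) \leq \binom{n}{s} - (n-r-1)$ and
\[
|E(G^*)| \;>\; \binom{n}{r} - \binom{n-s+1}{r-s+1} + (r-s+2).
\]
The right-hand side is the edge count of the star construction $G(n,n-r-1,r,s)$, which by the discussion preceding Theorem~\ref{thm:main} equals $\binom{n}{r} - K(n,n-r-1,s,r-s)$, so producing such a $G^*$ forces the defining equality of $t_{\max}$ to fail at $t=n-r-1$. I build $G^*$ recursively: the idea is to lift a solution for parameters $(r-1,s-1)$ through a single new vertex, retaining the edge advantage at every step.

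For the base case $s = 1$, let $G^*$ be the complete $r$-partite $r$-graph $K^r_{2,\ldots,2}$ with parts $\{a_{i,1},a_{i,2}\}$, $i \in [r]$, together with $n-2r$ isolated vertices. Every edge uses one vertex from each part, so the column sums $a_{i,1}+a_{i,2}$ in $M_1^r(G^*)$ are all equal to the all-ones vector, yielding $r-1$ independent linear relations among the $2r$ active columns; varying one coordinate of an edge at a time shows there are no further relations, giving active rank $r+1$. Together with the $n-2r$ zero columns from isolated vertices we obtain $\rank M_1^r(G^*) = r+1 = \binom{n}{1} - (n-r-1)$ and $|E(G^*)| = 2^r$. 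Since $G(n,n-r-1,r,1)$ is just $K^r_{r+1}$ plus isolated vertices with $r+1$ edges, we beat the star whenever $2^r > r+1$, which holds for every $r \geq 2$.

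For the inductive step $s \geq 2$, assume by induction that there is an $(r-1)$-graph $G'$ on $[n-1]$ with $\rank M_{s-1}^{r-1}(G') \leq \binom{n-1}{s-1} - (n-r-1)$ and $|E(G')| > \binom{n-1}{r-1} - \binom{n-s+1}{r-s+1} + (r-s+2)$, and define $G^* := K^r_{n-1} \cup \bigl\{\{n\}\cup T : T \in E(G')\bigr\}$. Order the columns of $M_s^r(G^*)$ by placing $s$-sets inside $[n-1]$ (``old'') first and $s$-sets containing $n$ (``new'', in bijection with $\binom{[n-1]}{s-1}$ via $\{n\}\cup S'\mapsto S'$) second, and order rows by placing edges of $K^r_{n-1}$ first and lifted edges last. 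The key identity $\{n\}\cup S'\subseteq\{n\}\cup T \iff S'\subseteq T$ yields the block form
\[
M_s^r(G^*) \;=\; \begin{pmatrix} M_s^r(K^r_{n-1}) & 0 \\ M_s^{r-1}(G') & M_{s-1}^{r-1}(G') \end{pmatrix}.
\]
By Theorem~\ref{thm:gottlieb}, once $n \geq n_0'(r,s)$ the top-left block has full column rank $\binom{n-1}{s}$, so row operations eliminate the $M_s^{r-1}(G')$ sub-block, giving $\rank M_s^r(G^*) = \binom{n-1}{s} + \rank M_{s-1}^{r-1}(G')$ and hence rank deficiency $\binom{n-1}{s-1} - \rank M_{s-1}^{r-1}(G') \geq n-r-1$ by the inductive hypothesis. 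Pascal's identity rearranges the star edge count on $[n]$ as $\binom{n-1}{r} + \bigl[\binom{n-1}{r-1}-\binom{n-s+1}{r-s+1}+(r-s+2)\bigr]$, while $|E(G^*)| = \binom{n-1}{r} + |E(G')|$, so the margin by which $G^*$ beats its star equals exactly the margin $G'$ beats its own star, closing the recursion. The heart of the proof is this block decomposition; everything else is bookkeeping, and the numerical inequality $2^{r-s+1} > r-s+2$ (which holds for all $r>s$) at the base ensures a positive margin propagates through every level of the induction.
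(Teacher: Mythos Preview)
Your argument is correct and follows essentially the same route as the paper: the recursive lift $G^* = K^r_{n-1} \cup \{\{n\}\cup T : T\in E(G')\}$ is exactly the construction of Lemma~\ref{lem:optim}, and your block-matrix inequality $\rank M_s^r(G^*) \leq \binom{n-1}{s} + \rank M_{s-1}^{r-1}(G')$ is precisely the paper's bound~\eqref{eq:rkbound} (note you only need the upper bound, so Gottlieb is not actually required here). The one genuine difference is the base case: the paper removes a copy of $K_{r+1}^r$ from $K_{r+2}^r$, whereas you use $K^r_{2,\ldots,2}$; both work, yours handles $r=2$ uniformly and gives the clean margin $2^{r-s+1}-(r-s+2)$. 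One small citation slip: the equality $e(G(n,n-r-1,r,s)) = \binom{n}{r}-K(n,n-r-1,s,r-s)$ is not established in the discussion before Theorem~\ref{thm:main} (which only gives inequalities), but rather follows from the Kruskal--Katona decomposition~\eqref{eq:tDecomp}, valid since $n-r-1\leq n-s$; this is exactly the first paragraph of the paper's own proof of Theorem~\ref{thm:sharp}.
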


The rest of this paper is organized as follows. In Section 3 we prove Theorem~\ref{thm:robust}. 
In Section 4 we prove the main theorem. Finally, in Section 5 we prove Theorem \ref{thm:sharp}.

\textbf{Remark.} After completion of this work we were informed by Harout Aydinian that Ahlswede, Aydinian and Khachatrian considered before the problem of determining $\rex(n, t, r, 1)$ and solved 
it in \cite{Ahls03} for all values of $n, t$ and $r$. Their proof technique is very different from ours. 
In \cite{Ahls03} it is also mentioned that the function $\rex(n, 1, r, 1)$ was first studied by Longstaff \cite{Longstaff77}, with the complete determination of its value being made by Odlyzko \cite{Odlyzko81}.

\section{\large{Resilience of the rank}} 
\label{sec:robust}

The main goal of this section is to prove Theorem~\ref{thm:robust}.

For an arbitrary $r$-graph $H$ and vertex $x \in V(H)$ we define two derived hypergraphs.
We shall denote by $H/x$ the $(r-1)$-graph on vertex set $V(H)-\{x\}$ with 
edge set $\{A \setminus \{x\} : A \in E(H), x\in A\}$ and denote by $H-x$ 
the $r$-graph  on vertex set $V(H)-\{x\}$ with edge set $\{A :  A \in
E(H), x\notin A\}$.
Observe that for every $x\in V(H)$, $|H| = |H/x| +|H-x|$.

Let $G$ be any $r$-graph with vertex set $V$. For any $s$-subset $S \subseteq V$ and any $r$-edge $e \in E(G)$, we shall denote by $c_S(G)$ the column corresponding to $S$ in $M_s^r(G)$, and by $c_S(G)_e$ the entry of $M_s^r(G)$ corresponding to the edge $e$ and the $s$-subset $S$. 
A sequence $\{\alpha_S\}_{S \in \binom{V}{s}}$ of real numbers is called 
a \textit{dependence sequence} for $G$ if
\begin{equation}
\label{eq:dependence}
\sum_{S \in \binom{V}{s}} \alpha_S c_S(G)_e = 0,
\end{equation}
for every $e \in E(G)$.
If not all the coefficients $\alpha_S$ are $0$, 
then the columns of $M_s^r(G)$ are linearly dependent and we call the sequence \textit{non-trivial}. 
For a dependence sequence we construct the \textit{associated $s$-graph} $G'$ on vertex set $V(G')=V(G)$ 
with edge set  $E(G') = \{S\in \tbinom{V}{s}  : \alpha_S\neq 0\}$.

For later use we collect here four simple, but useful observations about an arbitrary dependence sequence 
$\{\alpha_S\}_{S\in \tbinom{V}{s}}$ and its associated $s$-graph $G'$.
\begin{observation}
\label{obs:f}
If $V' \subseteq V(G)$ and $H := G[V']$ is the induced $r$-graph on $V'$ then $\{\alpha_S\}_{S \in \binom{V'}{s}}$ is a dependence sequence for $H$.
\end{observation}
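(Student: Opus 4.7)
The plan is to unwind the definition of a dependence sequence and observe that the condition restricts naturally. A dependence sequence $\{\alpha_S\}_{S\in\binom{V}{s}}$ for $G$ means, by \eqref{eq:dependence} and the definition of $c_S(G)_e$, that
\begin{equation*}
\sum_{\substack{S \in \binom{V}{s} \\ S \subseteq e}} \alpha_S \;=\; 0
\qquad \text{for every } e \in E(G).
\end{equation*}
I would simply specialize this identity to those edges of $G$ that survive the restriction to $V'$.

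More concretely, first I fix an arbitrary edge $e \in E(H) = E(G[V'])$. Since $H$ is the induced subhypergraph on $V'$, we have $e \in E(G)$ and $e \subseteq V'$. Thus the identity above applies to $e$. The key step is then the trivial set-theoretic observation that every $s$-subset $S \subseteq e$ automatically satisfies $S \subseteq V'$, so
\begin{equation*}
\bigl\{\, S \in \tbinom{V}{s} : S \subseteq e \,\bigr\}
\;=\;
\bigl\{\, S \in \tbinom{V'}{s} : S \subseteq e \,\bigr\}.
\end{equation*}
Therefore the sum over $\binom{V}{s}$ collapses to a sum over $\binom{V'}{s}$, and recognizing $c_S(H)_e = c_S(G)_e$ for $e \in E(H)$ and $S \in \binom{V'}{s}$, we conclude
\begin{equation*}
\sum_{S \in \binom{V'}{s}} \alpha_S\, c_S(H)_e \;=\; \sum_{\substack{S \in \binom{V}{s} \\ S \subseteq e}} \alpha_S \;=\; 0,
\end{equation*}
which is exactly what it means for $\{\alpha_S\}_{S \in \binom{V'}{s}}$ to be a dependence sequence for $H$.

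There is essentially no obstacle here: the statement is a direct unpacking of definitions, relying only on the fact that taking an induced subhypergraph preserves both the edges lying in $V'$ and the containment relation between edges and $s$-subsets. The only thing worth emphasizing is that this uses the \emph{induced} subgraph structure critically; if $H$ were a spanning subhypergraph of $G$ on the same vertex set but with fewer edges, the same conclusion would hold trivially, whereas if $H$ were obtained by deleting vertices but keeping edges as ``hyperedges with phantom vertices,'' the restriction of the sequence would no longer be well-defined.
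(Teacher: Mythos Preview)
Your proof is correct and is essentially the same as the paper's: both arguments observe that for an edge $e \in E(H) \subseteq E(G)$ every $s$-subset of $e$ already lies in $\binom{V'}{s}$, so the terms $\alpha_S c_S(G)_e$ with $S \notin \binom{V'}{s}$ vanish and \eqref{eq:dependence} for $G$ restricts to \eqref{eq:dependence} for $H$. The only difference is phrasing---the paper speaks of a zero submatrix while you speak of equal index sets---but the content is identical.
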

\begin{proof}
The proof follows by noting that the submatrix  of $M_s^r(G)$ indexed by the rows from $E(H)=\binom{V'}{r}\cap E(G)$ and the columns from $\binom{V}{s} \setminus \binom{V'}{s}$ has only zero entries, and hence when \eqref{eq:dependence} is applied to $G$ and $e \in E(H)$, all terms  of the form $\alpha_S c_S(G)_e$ with $S \notin \binom{V'}{s}$ vanish. That is, \eqref{eq:dependence} holds for $H$ and $e$ as well.
\end{proof}
An $r$-subset $R\subseteq V(G')$ is called a {\em $1$-clique} if the induced subgraph $G'[R]$ contains exactly one edge of $G'$.
\begin{observation}
\label{obs:s}
No edge of $G$ is a $1$-clique.
\end{observation}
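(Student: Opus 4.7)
The plan is to derive a direct contradiction from the defining identity \eqref{eq:dependence}, applied to the very edge of $G$ that is suspected to be a $1$-clique. Suppose, toward a contradiction, that some $e \in E(G)$ is a $1$-clique of $G'$. Unpacking the definition, this means that the induced $s$-graph $G'[e]$ has exactly one edge, so among the $s$-subsets $S \subseteq e$ there is a unique one, call it $S^{*}$, with $\alpha_{S^{*}} \neq 0$, while $\alpha_S = 0$ for every other $s$-subset $S \subseteq e$.

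I would then specialize \eqref{eq:dependence} to this specific edge $e$. Since $c_S(G)_e = 1$ precisely when $S \subseteq e$ and vanishes otherwise, the full sum $\sum_{S \in \binom{V}{s}} \alpha_S c_S(G)_e$ collapses to $\sum_{S \in \binom{e}{s}} \alpha_S$. By the previous step, the only surviving summand is $\alpha_{S^{*}}$, so \eqref{eq:dependence} forces $\alpha_{S^{*}} = 0$, contradicting $S^{*} \in E(G')$.

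I do not anticipate any substantive obstacle here: the statement is essentially a one-line unpacking of \eqref{eq:dependence} once one restricts attention to the $s$-subsets contained in a single $r$-edge of $G$. The only mild bookkeeping point is the distinction between summing over all of $\binom{V}{s}$ (the original dependence relation) and the reduced sum over $\binom{e}{s}$ (obtained after discarding the zero entries of the $e$-th row of $M_s^r(G)$); the role of the $1$-clique hypothesis is precisely that this reduced sum has exactly one nonzero summand, so it cannot be equal to $0$.
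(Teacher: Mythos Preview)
Your proof is correct and follows essentially the same approach as the paper's: both apply the dependence relation \eqref{eq:dependence} to the putative $1$-clique edge, observe that the sum reduces to the single nonzero coefficient, and derive the contradiction $\alpha_{S^{*}}=0$.
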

\begin{proof}
Indeed, if $R\in E(G)$ is an $r$-set containing exactly one edge $S'$ of $G'$, then $\alpha_S = 0$ for any $S \in \binom{R}{s} - \{S'\}$.  
Applying \eqref{eq:dependence} to $G$ and $R$ we obtain $\alpha_{S'} = \alpha_{S'}c_{S'}(G)_R= 0$, so $S'$ is not an edge of $G'$, a 
contradiction.
\end{proof}
\begin{observation}
\label{obs:t}
If $x \in V(G)$ and $E(G' - x)$ is empty, then the sequence $\beta_{S'} := \alpha_{\{x\} \cup S'}, S' \in \binom{V(G/x)}{s-1}$ is a dependence sequence for $G/x$.
\end{observation}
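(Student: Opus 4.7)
The plan is to unpack what the hypothesis $E(G'-x) = \emptyset$ says about the $\alpha_S$ and then restrict the dependence relation for $G$ to edges containing $x$.

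First I would observe that the condition $E(G' - x) = \emptyset$ means exactly that every edge of $G'$ contains $x$, i.e.\ $\alpha_S = 0$ for every $s$-set $S \subseteq V(G)$ with $x \notin S$. Next, recall that by definition the edges $e'$ of $G/x$ are in bijection with the edges $e$ of $G$ containing $x$ via $e = e' \cup \{x\}$. So for each edge $e' \in E(G/x)$, I would apply the dependence relation \eqref{eq:dependence} to $G$ at the edge $e = e' \cup \{x\}$:
\begin{equation*}
0 \;=\; \sum_{S \in \binom{V(G)}{s}} \alpha_S\, c_S(G)_e \;=\; \sum_{\substack{S \in \binom{e}{s}}} \alpha_S.
\end{equation*}

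In this last sum, any $s$-subset $S \subseteq e$ not containing $x$ already satisfies $\alpha_S = 0$ by the first step, so only terms with $x \in S$ survive. Writing such an $S$ uniquely as $S = \{x\} \cup S'$ with $S' \in \binom{e'}{s-1}$, I obtain
\begin{equation*}
0 \;=\; \sum_{S' \in \binom{e'}{s-1}} \alpha_{\{x\}\cup S'} \;=\; \sum_{S' \in \binom{V(G/x)}{s-1}} \beta_{S'} \, c_{S'}(G/x)_{e'},
\end{equation*}
which is precisely \eqref{eq:dependence} for the hypergraph $G/x$ at the edge $e'$. Since this holds for every $e' \in E(G/x)$, the sequence $\{\beta_{S'}\}$ is a dependence sequence for $G/x$, as required.

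There is no real obstacle here; the statement is essentially bookkeeping once one observes that the hypothesis forces $\alpha_S = 0$ whenever $x \notin S$ and that edges of $G/x$ correspond bijectively to edges of $G$ through $x$.
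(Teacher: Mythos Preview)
Your proof is correct and follows essentially the same approach as the paper: both use the hypothesis to kill the terms $\alpha_S$ with $x\notin S$ and then identify the dependence relation for $G$ at $e'\cup\{x\}$ with the one for $G/x$ at $e'$. The only difference is cosmetic---you pass through the intermediate expression $\sum_{S\in\binom{e}{s}}\alpha_S$, whereas the paper keeps the indicator $c_S(G)_{\{x\}\cup e}$ explicit throughout.
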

\begin{proof}
Note that the condition of $E(G' - x)$ being empty implies $\alpha_S = 0$, for any $s$-set $S$ not containing $x$. Then for any $e \in E(G/x)$ we have
\begin{align*} 
\sum_{S' \in \binom{V(G/x)}{s-1}} \beta_{S'} c_{S'}(G/x)_e 
&= \sum_{S' \in \binom{V(G/x)}{s-1}} \alpha_{\{x\} \cup S'} c_{\{x\} \cup S'}(G)_{\{x\} \cup e}, 
\quad \textrm{by definition,}\\
&= \sum_{S \in \binom{V(G)}{s}, S\ni x} \alpha_{S} c_S(G)_{\{x\} \cup e},\\
&= \sum_{S \in \binom{V(G)}{s}} \alpha_S c_S(G)_{\{x\} \cup e}, 
\quad \textrm{as $\alpha_S = 0$ when $x \notin S$,}\\
&= 0, \quad \textrm{by \eqref{eq:dependence} applied to $G$ and $\{x\} \cup e$.} 
\end{align*}
So \eqref{eq:dependence} holds for $G/x$ and $\{\beta_{S'}\}$ as well.
\end{proof}

Our last observation extends the notion of a $1$-clique. 
A \textit{$k$-semistar with center $x$ and set of leaves $L$} is an induced subgraph of $G'$ on the $(k+1)$-set of vertices $\{x\} \cup L$, 
such that $L$ is an independent set in $G'$ of size $k$ 
and at least one of the $s$-sets $\{x\} \cup S', S' \in \binom{L}{s-1},$ is present in $E(G')$.

\begin{observation}
\label{obs:semistar}
For any $(r+s-2)$-semistar $Z$ in $G'$, with center $x$ and set of leaves $L$, at least one of the $r$-edges $\{x\} \cup R', R' \in \binom{L}{r-1},$ must be in $F$. 
\end{observation}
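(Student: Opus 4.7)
The plan is to argue by contradiction: suppose every $r$-set $\{x\} \cup R'$ with $R' \in \binom{L}{r-1}$ lies in $E(G) = E(K_n^r) \setminus F$, and derive a nontrivial dependence sequence on a complete hypergraph, contradicting Gottlieb's Theorem~\ref{thm:gottlieb}.

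First, I would restrict to the vertex set $V' := \{x\} \cup L$ of size $r+s-1$ and apply Observation~\ref{obs:f} to conclude that $\{\alpha_S\}_{S \in \binom{V'}{s}}$ is a dependence sequence for the induced $r$-graph $H := G[V']$. Since $L$ is independent in $G'$, every $s$-set $S \subseteq L$ satisfies $\alpha_S = 0$; in other words, the only potentially nonzero coefficients among $\{\alpha_S\}_{S \in \binom{V'}{s}}$ are the $\alpha_{\{x\} \cup S'}$ with $S' \in \binom{L}{s-1}$. In particular, the associated $s$-graph of this restricted dependence sequence, viewed on $V(H)$, has no edges disjoint from $x$, so the hypothesis of Observation~\ref{obs:t} is satisfied for $H$ at the vertex $x$.

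Next I would apply Observation~\ref{obs:t} to $H$ and $x$ to obtain that $\beta_{S'} := \alpha_{\{x\} \cup S'}$, indexed by $S' \in \binom{L}{s-1}$, is a dependence sequence for $H/x$. By the semistar hypothesis, at least one $\alpha_{\{x\}\cup S'}$ is nonzero, so this sequence is nontrivial. Under the contradiction assumption, every $r$-set $\{x\} \cup R'$ with $R' \in \binom{L}{r-1}$ is an edge of $H$, so every $(r-1)$-subset of $L$ is an edge of $H/x$; that is, $H/x = K^{r-1}_{r+s-2}$.

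Finally, I would invoke Theorem~\ref{thm:gottlieb} with parameters $n' = r+s-2$, $r' = r-1$, $s' = s-1$, which gives
\[
\rank M_{s-1}^{r-1}(K^{r-1}_{r+s-2}) = \min\left\{\binom{r+s-2}{r-1}, \binom{r+s-2}{s-1}\right\} = \binom{r+s-2}{s-1}.
\]
This is exactly the number of columns, so the columns of $M_{s-1}^{r-1}(H/x)$ are linearly independent, meaning no nontrivial dependence sequence can exist on $H/x$. This contradicts the existence of the nontrivial sequence $\{\beta_{S'}\}$, completing the proof. The only subtle point is the verification that $E(G'-x) \cap \binom{L}{s} = \emptyset$ so that Observation~\ref{obs:t} is applicable after the reduction to $V'$; this follows immediately from $L$ being independent in $G'$.
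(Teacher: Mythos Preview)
Your proof is correct and follows essentially the same route as the paper: restrict to $V(Z)$ via Observation~\ref{obs:f}, contract $x$ via Observation~\ref{obs:t}, and apply Gottlieb's Theorem to $K^{r-1}_{r+s-2}$ to contradict the existence of a nontrivial dependence sequence. The only cosmetic difference is that the paper treats $s=1$ separately by observing that $V(Z)$ is then a $1$-clique and invoking Observation~\ref{obs:s}; your uniform argument covers this case as well, since for $s=1$ one gets $H/x=K^{r-1}_{r-1}$ and $M_0^{r-1}$ has the single all-ones column of full rank.
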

\begin{proof}
If $s=1$, then $V(Z)$ is a $1$-clique, because $\{ x \} \in E(G')$ and 
$V(Z)\setminus \{ x\}$ is independent in $G'$. Hence $V(Z)$ is in 
$F$ by Observation~\ref{obs:s} and the claim holds.

So we may assume for a contradiction that $s \geq 2$ and every edge $\{x\} \cup R', R' \in \binom{L}{r-1},$ is in $G$. Consider the $(r-1)$-graph $H$ induced by $G/x$ on $L$. Then $H \simeq K^{r-1}_{r+s-2}$ is complete. Hence $M_{s-1}^{r-1}(H)$ has full rank by Gottlieb's Theorem. 

On the other hand, we can first apply Observation \ref{obs:f} to $G$ and $V(Z)$ followed by Observation \ref{obs:t} to $x$ and $G[V(Z)]$, and conclude that the sequence defined by $\beta_{S'} := \alpha_{\{x\} \cup S'}, S' \in \binom{L}{s-1},$ is a dependence sequence for $H = G[V(Z)]/x$. Since $Z$ is a semistar, at least one of the $s$-sets $\{x\} \cup S', S' \in \binom{L}{s-1},$ must be present in $E(G')$, hence $\beta_{S'} \neq 0$ for some $S' \in \binom{L}{s-1}$.
In conclusion, the dependence sequence is non-trivial and this contradicts the columns of $M_{s-1}^{r-1}(H)$ being linearly independent.
\end{proof}

Before we turn to the proof of Theorem~\ref{thm:robust}, we show a
lemma ensuring the existence of a large independent set in $G'$ provided
the rank drops below $\binom{n}{s}$  upon the deletion of a not so large family $F$ of edges.
\begin{lemma}\label{lem:stable_set}
  Let $F\subseteq E(K^r_n)$ be such that $\rank M^r_s(K^r_n- F)<\tbinom{n}{s}$ and 
$(n-r-s)\left(\tbinom{n}{r}-|F|\right)\ge|F|\tbinom{r}{s}\tbinom{n-r+s}{s}$, then any associated $s$-graph $G'$ has 
an independent set of size at least 
\[
 n-\frac{|F|\tbinom{r}{s}\tbinom{n-r+s}{s}}{\tbinom{n}{r}-|F|}.
\]
\end{lemma}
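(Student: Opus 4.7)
Let $L$ be a maximum independent set in $G'$ with $|L|=k$, let $B:=V\setminus L$, and set $D:=|B|=n-k$. The goal is to establish
\[
D\Bigl(\binom{n}{r}-|F|\Bigr)\le|F|\binom{r}{s}\binom{n-r+s}{s}.
\]

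The plan is to localize the given non-trivial dependence sequence $\{\alpha_S\}$ at each vertex $v\in B$, producing a dependence on the ``link'' at $v$, and then to apply Lemma~\ref{lem:keevash} to each link. For $v\in B$ consider the vector $\alpha^{(v)}:=(\alpha_{\{v\}\cup T})_{T\in\binom{L}{s-1}}$. For any $R'\in\binom{L}{r-1}$ with $\{v\}\cup R'\in E(G)$, splitting the defining relation~\eqref{eq:dependence} at the edge $\{v\}\cup R'$ according to whether the $s$-subset contains $v$ gives
\[
\sum_{T\in\binom{R'}{s-1}}\alpha_{\{v\}\cup T}+\sum_{S\in\binom{R'}{s}}\alpha_S=0;
\]
the second sum vanishes since $R'\subseteq L$ is $G'$-independent, so $\alpha^{(v)}$ is a dependence sequence for $M_{s-1}^{r-1}(K^{r-1}_k-F_v)$, where $F_v:=\{R'\in\binom{L}{r-1}:\{v\}\cup R'\in F\}$. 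Maximality of $L$ supplies an edge $S_v\in E(G')$ with $v\in S_v\subseteq L\cup\{v\}$, whence $\alpha^{(v)}_{S_v\setminus\{v\}}\ne 0$ and the link-dependence is non-trivial.

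Applying Lemma~\ref{lem:keevash} at the link forces $|F_v|\ge\binom{r-1}{s-1}^{-1}\binom{k}{r-s}$. Summing over $v\in B$ and noting that each $R\in F$ belongs to at most one $F_v$ (namely when $|R\cap B|=1$, in which case $v$ is the unique element of $R\cap B$) yields
\[
D\cdot\binom{r-1}{s-1}^{-1}\binom{k}{r-s}\le|F|.
\]
Substituting $k=n-D$ and using the identity $\binom{n}{r-s}\binom{n-r+s}{s}=\binom{n}{r}\binom{r}{s}$ then rearranges into (in fact a form stronger than) the claimed bound via routine binomial manipulations.

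The main obstacle is that Lemma~\ref{lem:keevash} requires $k\ge 2(r-1)+(s-1)=2r+s-3$, whereas the hypothesis of the present lemma only immediately guarantees $k\ge r+s$ (obtained by reading the contrapositive of the desired conclusion against the given inequality). These agree for $r\le 3$, but for $r\ge 4$ one must boost $k$. One way is induction on $r$: the base cases $r\in\{2,3\}$ follow from the above, and in the inductive step the link has parameters $(r-1,s-1)$ for which the analogous bound is already available. An alternative that avoids Lemma~\ref{lem:keevash} altogether is to exploit Observation~\ref{obs:s} directly: for each $v\in B$ the $(r-1)$-subsets $R'\subseteq L$ with $|N_v\cap\binom{R'}{s-1}|=1$—where $N_v:=\{T\in\binom{L}{s-1}:\{v\}\cup T\in E(G')\}$—must all satisfy $\{v\}\cup R'\in F$, and an inclusion–exclusion count of such $R'$ gives an independent lower bound on $|F_v|$ of essentially the same flavor.
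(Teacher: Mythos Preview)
Your approach is genuinely different from the paper's: you start from a maximum independent set $L$ and try to bound $|B|=n-|L|$ by passing to links and invoking Lemma~\ref{lem:keevash}, whereas the paper constructs an independent set directly. There the authors use an averaging argument to find an edge $R\in E(G)$ meeting few sets $e\in F$ in $\ge r-s$ vertices, delete one vertex $v_e\in e\setminus R$ for each such $e$, and verify via Observation~\ref{obs:f} and Gottlieb's theorem (applied to $(r+s)$-subsets containing $R$) that the remaining set is independent in $G'$. That argument is self-contained and yields the stated constant with no recursion.

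Your proof, as written, has two real gaps.

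\emph{First}, the appeal to Lemma~\ref{lem:keevash} on the link needs $k\ge 2r+s-3$, and your claim that the hypothesis ``immediately guarantees $k\ge r+s$'' is circular: the hypothesis $(n-r-s)\bigl(\tbinom{n}{r}-|F|\bigr)\ge |F|\tbinom{r}{s}\tbinom{n-r+s}{s}$ only says that the \emph{target} lower bound for the independence number is at least $r+s$; it says nothing about the actual maximum $k$ until the lemma is proved. Your induction-on-$r$ workaround is not fleshed out: it is unclear what statement you induct on, and the inductive hypothesis (which is about independent sets in an associated $(s-1)$-graph on $L$) does not obviously yield the needed lower bound on $|F_v|$. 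The alternative via Observation~\ref{obs:s} and inclusion--exclusion is left as a gesture.

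\emph{Second}, even granting $D\tbinom{k}{r-s}\le |F|\tbinom{r-1}{s-1}$, the ``routine binomial manipulations'' are not routine, because $k=n-D$ sits on the right-hand side. Your inequality constrains $D$ to lie in one of two regions (small, or close to $n-r+s$), and without an a priori upper bound on $D$ you cannot exclude the large-$D$ branch; moreover, in intermediate regimes the comparison $\tbinom{n}{r-s}\le \tbinom{r}{s}\tbinom{n-D}{r-s}$, which is what the deduction would need, can fail. So the precise constant of the lemma does not obviously follow. The paper's averaging construction sidesteps both issues: it produces a set of the exact claimed size and certifies independence with a single application of Gottlieb's theorem.
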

\begin{proof}
Let $G:=K^r_n- F$. To begin with, we find an edge $R\in E(G)$
intersecting not too many $r$-sets $e\in F$ in at least $r-s$ elements.
We estimate the number of such pairs $(R,e)$ (where $R\in E(G)$, $e\in
F$ with $|e \cap R| \geq r-s$) from above by
$|F|\binom{r}{s}\binom{n-r+s}{s}$. Indeed, we can  choose 
$e \in F$ in $|F|$ ways, then specify a subset of $e$ of size $r-s$ in
$\binom{r}{r-s}$ ways, 
and extend it to an element 
$R \in \binom{V(G)}{r} \setminus F$ in at most $\binom{n-r+s}{s}$ ways.
By averaging, we find a set $R\in E(G)$ for which there are at most
\[
\frac{|F|\tbinom{r}{s}\tbinom{n-r+s}{s}}{\tbinom{n}{r}-|F|}
\]
$r$-sets $e\in F$ with $|e\cap R|\ge r-s$. 

We now fix one such edge $R\in E(G)$, an arbitrary non-trivial dependence
sequence $\{ \alpha_S\}_{S\in \binom{V}{s}}$, and  deduce that the associated $s$-graph $G'$ has a large independent set. 
For every edge $e \in F$ with $|e \cap R| \geq r-s$, we choose an arbitrary vertex $v_e \in e \setminus R$. 
This is possible, as $e$ is also an $r$-subset and $e\neq R$. Now let $A := V(G')\setminus \{v_e : e \in F \textrm{ and } |e \cap R| \geq r-s\}$. 
Note that $R \subseteq A$. By the assumption of the lemma,
we also have that 
\[
|A|\ge n-\tfrac{|F|\tbinom{r}{s}\tbinom{n-r+s}{s}}{\tbinom{n}{r}-|F|} \geq r+s. 
\]

We claim that $A$ is independent in $G'$.
Suppose for a contradiction that an $s$-set $Q \subseteq A$ is an edge in $G'$. Form a set $V' \subseteq A$ on $r+s$ vertices containing $R$ and $Q$. This is possible, as $|R\cup Q| \leq r+s$ and there are enough vertices in $A$. By construction, no $r$-set $e\in F$ is contained in $V'$, as otherwise $|e \cap R| \geq |e| - |V'\setminus R| = r-s$ hence $v_e$ is defined and $v_e \in e \subseteq V' \subseteq A$, a contradiction. Therefore $G= K_n^r - F$ induces on $V'$ a complete $r$-graph $\binom{V'}{r}$, which by Gottlieb's theorem must have an inclusion matrix $M_s^r(G[V'])$ of full rank. In particular, every dependence sequence for $G[V']$ must be trivial. 

However, by Observation \ref{obs:f}, the coefficients $\alpha_T, T \in \binom{V'}{s},$ form a dependence sequence for the $r$-graph $G[V']$, 
which is non-trivial, as $Q \subseteq V'$ and $\alpha_Q \neq 0$. This is a contradiction, completing the proof of the lemma. 
\end{proof}

We are now ready to prove Theorem~\ref{thm:robust}.

\begin{proof}[Proof of Theorem~\ref{thm:robust}.]
Note that for $r=s$ or $s=0$, we may simply choose $\eps := 1, n_1 := r$ and $\alpha := 1$. Hence we may assume $r > s > 0$.

We first set some positive constants, depending on $r$ and $s$:
\begin{align}
\label{eps} \eps & < \min\left\{\frac{(r-s)!}{2r!}, \frac{1}{4\binom{r}{s}^2}, \frac{1}{r!2^{2r-s}}\right\},\\
\label{delta} \delta &= 2\eps \binom{r}{s}^2,\\
\label{alpha} \alpha & < \left(\frac{1-2\delta}{2(r-s)}\right)^{r-s}, \\
\label{n_1} n_1 & = \left\lceil \frac{4r}{\frac{1}{2}-\delta}\right\rceil.
\end{align}

\noindent Note that, by~\eqref{eps}, $\delta <1/2$, hence $\alpha$ and $n_1$ can  indeed be chosen to be positive.

Suppose for a contradiction that Theorem~\ref{thm:robust} is false. Then there exists a family $F \subset E(K_n^r)$ of $r$-subsets
on $n \geq n_1$ vertices, with $|F| < \eps n \binom{n-s}{r-s}$ and every $s$-set $S \subseteq V(G)$ is contained in less than $\alpha \binom{n-s}{r-s}$ members of $F$,
such that for $G := K_n^r - F$ we have $\rank M_s^r(G) < \binom{n}{s}$. 
Let $\{\alpha_S\}_{S\in \tbinom{V}{s}}$ be a non-trivial dependence sequence for $G$ and let $G'$ be the associated $s$-graph. Note that $E(G')$ is non-empty, as the sequence is non-trivial. 

Since $s \geq 1$ and $\eps < \frac{(r-s)!}{2r!}$ we have that
\begin{equation*}
\label{eq:binom1}
\binom{n}{r} > 2\eps n\binom{n-s}{r-s} > 2|F|.
\end{equation*}
Then 
$$\frac{|F|\binom{r}{s}\binom{n-r+s}{s}}{\binom{n}{r} -|F|}
\leq 2\eps
n\binom{n-s}{r-s}\binom{r}{r-s}\binom{n-r+s}{s}\binom{n}{r}^{-1}
\leq 2\eps \binom{r}{s}^2n = \delta n,$$
so by Lemma~\ref{lem:stable_set}, $G'$ contains an independent set $A$ of maximum size at least $(1-\delta)n$. 
 Define $B:= V(G') \setminus A$. Then $B$ is non-empty, because $\{\alpha_S\}_{S \in \binom{V(G)}{s}}$ is non-trivial.

In the following we will reach the desired contradiction by showing that the size of $F$ is larger than assumed.

For $x \in B$ we call $S' \in \binom{A}{s-1}$ an \textit{$x$-leaf} if $\{x\} \cup S' \in E(G')$. 
Let $d_A(x)$ be the number of $x$-leaves. We also let $\mu_A(x)$ be the number of vertices in $A$ 
covered by the $x$-leaves. Note that for $s > 1$ we have $d_A(x) \geq \frac{\mu_A(x)}{s-1} \geq 1$, 
since otherwise $A \cup \{x\}$ would be a larger independent set, contradicting the maximality of $A$.

First assume that for some $x \in B$ we have $\mu_A(x) \leq n/2$. Fix this $x$ and let $S_x$ be any $x$-leaf. Note that for $s=1$, we have $S_x = \emptyset$.

If $T$ is any $(r-s)$-subset of $A$ not intersecting any $x$-leaf then $\{x\} \cup S_x \cup T$ forms a $1$-clique, 
because $S_x \cup T \subseteq A$ is an independent set. By Observation~\ref{obs:s}, this shows that there are at 
least $\binom{|A|-\mu_A(x)}{r-s}$ elements of $F$ containing the $s$-set $\{x\} \cup S_x$. 
This is a contradiction to the choice of $F$, 
because 
\begin{equation*}
\binom{|A|-\mu_A(x)}{r-s} \geq \left( \frac{|A|-\mu_A(x)}{r-s}\right)^{r-s} \geq \left(\frac{1-2\delta}{2(r-s)}\right)^{r-s}n^{r-s} > \alpha n^{r-s} > \alpha\binom{n-s}{r-s}.
\end{equation*}
Here we use the fact that $|A|-\mu_A(x) \geq (\frac{1}{2}-\delta)n_1 \geq 4r \geq r-s$ by \eqref{n_1}.

Hence for any $x \in B$ we have that $\mu_A(x) > n/2$. In particular, $s > 1$.

We double-count the triples $(x,S,R)$ where $x\in B$, $S$ is an $x$-leaf, and $R\in \tbinom{A}{r-1}$ with $R\cap S=\emptyset$.
Note that for any such triple, $\{x \} \cup S \cup R$ is an $(r+s-2)$-semistar with center $x$, and their number is
$\sum_{x \in B} d_A(x) \binom{|A|-s+1}{r-1}$. 
On the other hand, by Observation~\ref{obs:semistar}, for any $(r+s-2)$-star $Z$ with center $x$ we can fix an $r$-set $R_Z$ 
such that $x\in R_Z$ and $R_Z\in \tbinom{V(Z)}{r} \cap F$.
Now, for any such $r$-set $R^* \in F$ the number of those triples $(x,S,R)$ for which $R_{\{x\}\cup S \cup R} = R^*$ is
at most $\binom{|A|-r+1}{s-1}$. Note that given $R^*$, the center $x$ is determined uniquely, provided $|R^*\cap B| =1$ (otherwise
there is no appropriate triple at all).

These imply the following lower bound for the size of $F$:
\begin{align}
|F| &\geq \sum_{x \in B} d_A(x) \binom{|A|-s+1}{r-1} \binom{|A|-r+1}{s-1}^{-1} \nonumber \\
&\geq \left(\sum_{x \in B} d_A(x) \right) \frac{1}{(r-1)!}\left(|A|-s-r+3\right)^{r-1} \frac{(s-1)!}{|A|^{s-1}} \nonumber \\
&\geq \frac{(s-1)!}{(r-1)!2^{r-1}} |A|^{r-s} \left(\sum_{x \in B} d_A(x) \right), \textrm{ as $|A| \geq 2(r+s-3)$,} \nonumber \\
&\geq \frac{(s-1)!}{(r-1)!2^{r-1}} |A|^{r-s} \left(\sum_{x \in B} \frac{\mu_A(x)}{s-1} \right) \nonumber 
\\
&\geq \frac{(s-2)!}{(r-1)!2^{2r-s}} n^{r-s+1}, \ \textrm{because
  $\mu_A(x) \geq \frac{n}{2}$ and hence $|A| \geq n/2$.} \nonumber\\
&> \eps n^{r-s+1}, \  \textrm{by~\eqref{eps}} \nonumber\\ 
&\geq \eps n\binom{n-s}{r-s} \nonumber
\end{align}

This contradicts the assumption on the size of $F$
and finishes the proof of Theorem~\ref{thm:robust}. 
\end{proof}

\begin{remark}
The argument above can be adapted to show that in the case $t=1$ of
Theorem~\ref{thm:main} $n_0$ can be taken $2^{5r}$.
\end{remark}

\section{\large{Proof of the main result}}
\label{sec:main}

For any $r \geq s \geq 1$ and $0 \leq t \leq n$ we define 
\begin{equation}
\label{eq:Ndef}
N(n, t, r, s) := \binom{n-s+1}{r-s+1} - \binom{n-s+1-t}{r-s+1}.
\end{equation}
Then for any $s > 1$ it holds that
\begin{equation}
\label{eq:mono}
N(n-1, t, r-1, s-1) = N(n, t, r, s).
\end{equation}
Further note that
\begin{equation}
\label{eq:redef}
N(n, t, r, s) = \sum_{i=1}^{t}\binom{n-s+1-i}{r-s}.
\end{equation}
We shall need the following two technical results about the behaviour of $N(\cdot)$.
\begin{lemma}
\label{lem:Nspeed}
Let $r > s \geq 1$ and $\alpha \in (0, 1)$. Then there exist positive constants $\gamma_0 < \gamma_1 < 1$ and $n_2$ such that the following holds. For any $n \geq n_2, t \leq \gamma_0 n$ and $p \geq \gamma_1 n$ we have that
\begin{equation}
\label{eq:decrease}
\alpha N(n, p, r, s) > N(n, t, r, s).
\end{equation}
\end{lemma}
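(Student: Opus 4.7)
The approach is to compare $N(n,t,r,s)$ and $N(n,p,r,s)$ directly via the sum representation \eqref{eq:redef}, exploiting that each summand $\binom{n-s+1-i}{r-s}$ is a slowly decreasing function of $i$: it does not lose more than a bounded factor as long as $i$ stays linear in $n$. So the strategy is to upper bound $N(n,t,r,s)$ by its first (largest) term times $t$, and to lower bound $N(n,p,r,s)$ by keeping only its first $\Theta(n)$ terms, each of which is a fixed constant fraction of the first.

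Concretely, I would proceed in three short steps. First, using \eqref{eq:redef} and the monotonicity of the summands, I bound
\[
N(n,t,r,s) \;\leq\; t\binom{n-s}{r-s} \;\leq\; \gamma_0\, n\binom{n-s}{r-s}.
\]
Second, since $N$ is monotone increasing in $p$, I may lower bound $N(n,p,r,s) \geq N(n, \lceil \gamma_1 n\rceil, r, s)$ and then bound this sum from below by its $\lceil \gamma_1 n\rceil$-th (smallest) term:
\[
N(n,p,r,s) \;\geq\; \lceil \gamma_1 n \rceil \binom{n-s+1-\lceil \gamma_1 n\rceil}{r-s}.
\]
A routine computation shows that for $n$ large enough the ratio $\binom{n-s+1-\lceil \gamma_1 n \rceil}{r-s}/\binom{n-s}{r-s}$ is at least $\tfrac{1}{2}(1-\gamma_1)^{r-s}$, so one obtains a lower bound of the form $c(\gamma_1, r, s)\cdot n\binom{n-s}{r-s}$.

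Third, combining the two estimates, the inequality $\alpha N(n,p,r,s)>N(n,t,r,s)$ reduces to choosing the constants so that $\alpha \cdot c(\gamma_1,r,s) > \gamma_0$ along with $\gamma_0 < \gamma_1 < 1$. Taking $\gamma_1 := 1/2$ makes $c(\gamma_1,r,s)$ a fixed positive constant (roughly $2^{-(r-s+1)}$), and then any $\gamma_0 < \alpha/2^{r-s+2}$ works; the required inequality $\gamma_0 < \gamma_1$ is automatic because $\alpha<1$ and $r-s+2 \geq 3$. Finally, $n_2$ is chosen large enough so that the asymptotic binomial ratio used in Step 2 holds with the slack assumed in Step 3.

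I do not anticipate a genuine obstacle here; the lemma is essentially an asymptotic inequality between two explicit polynomials in $n$, and the only care needed is bookkeeping of the constants (specifically, making the loss in the binomial ratio $\binom{n/2-s}{r-s}/\binom{n-s}{r-s}$ small compared to its limiting value $2^{-(r-s)}$ for $n \geq n_2$).
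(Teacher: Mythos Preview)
Your argument is correct. Both proofs are elementary estimates of the same polynomial inequality, but they proceed differently: the paper works directly with the closed form \eqref{eq:Ndef}, rewriting $\alpha N(n,p,r,s)>N(n,t,r,s)$ as
\[
(1-\alpha)\binom{n-s+1}{r-s+1} < \binom{n-s+1-t}{r-s+1} - \binom{n-s+1-p}{r-s+1},
\]
then replacing each binomial by crude $n^{r-s+1}/(r-s+1)!$-type bounds to reduce the claim to the purely numerical condition $1-\alpha < (1-2\gamma_0)^{r-s+1}-(1-\gamma_1)^{r-s+1}$, which plainly has a solution in $(0,1)^2$. You instead exploit the sum form \eqref{eq:redef} and bound each $N$ by (number of terms)$\times$(extreme term), ending with the explicit choice $\gamma_1=1/2$ and $\gamma_0<\alpha/2^{r-s+2}$. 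Your route is marginally more transparent in that it produces concrete constants without unpacking the binomials into polynomial powers; the paper's route is slightly more symmetric in how it treats the two sides. Either way the lemma is a routine asymptotic comparison, and your bookkeeping is sound.
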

\begin{proof}
By definition of $N(\cdot)$ it is enough to prove the following
\begin{equation*}
(1-\alpha)\binom{n-s+1}{r-s+1} < \binom{n-s+1-t}{r-s+1} - \binom{n-s+1-p}{r-s+1}.
\end{equation*}
The left hand side is at most
\begin{equation*}
(1-\alpha)\frac{n^{r-s+1}}{(r-s+1)!},
\end{equation*}
while the right hand side is at least
\begin{equation*}
\frac{1}{(r-s+1)!}\left(\left(n-s+1-t-(r-s)\right)^{r-s+1} - \left((1-\gamma_1)n\right)^{r-s+1}\right).
\end{equation*}
Provided $n_2 \geq \frac{r-1}{\gamma_0}$, this is at least
\begin{equation*}
\frac{n^{r-s+1}}{(r-s+1)!}\left((1-2\gamma_0)^{r-s+1} - (1-\gamma_1)^{r-s+1}\right).
\end{equation*}
Hence it is enough if $1 - \alpha < (1-2\gamma_0)^{r-s+1}-(1-\gamma_1)^{r-s+1}$, which clearly has a solution $(\gamma_0, \gamma_1) \in (0,1)^2$ as desired.
\end{proof}
\begin{lemma}
\label{lem:Ncomp}
Let $r > s \geq 1$ and $\alpha \in (0, 1)$. Then there exist positive constants $\sigma < 1$ and $n_3$ such that the following holds. For any $n \geq n_3$ and $t \leq \sigma n$ it holds that 
\begin{equation}
\label{eq:secDecr}
N(n, t, r, s) - \alpha\binom{n-s}{r-s} < N(n-1, t, r, s).
\end{equation}
\end{lemma}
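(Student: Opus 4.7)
The plan is to reduce the inequality to a simple estimate on a single binomial coefficient via Pascal's rule. Rewriting the desired conclusion~\eqref{eq:secDecr} in the form
\begin{equation*}
N(n,t,r,s) - N(n-1,t,r,s) < \alpha\binom{n-s}{r-s},
\end{equation*}
I would use the explicit formula~\eqref{eq:redef} and Pascal's identity $\binom{n-s+1-i}{r-s} = \binom{n-s-i}{r-s} + \binom{n-s-i}{r-s-1}$ applied term-by-term to obtain
\begin{equation*}
N(n,t,r,s) - N(n-1,t,r,s) = \sum_{i=1}^{t}\binom{n-s-i}{r-s-1}.
\end{equation*}

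The key step is that each summand on the right-hand side is bounded above by the first one, namely $\binom{n-s-1}{r-s-1}$, because $r-s-1 \geq 0$ and the top argument decreases with $i$. This yields
\begin{equation*}
N(n,t,r,s) - N(n-1,t,r,s) \leq t\binom{n-s-1}{r-s-1} = t\cdot\frac{r-s}{n-s}\binom{n-s}{r-s},
\end{equation*}
using the standard identity $\binom{n-s}{r-s} = \frac{n-s}{r-s}\binom{n-s-1}{r-s-1}$.

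It remains to choose $\sigma$ and $n_3$ so that $t\cdot\frac{r-s}{n-s} < \alpha$ whenever $t \leq \sigma n$ and $n \geq n_3$. Setting, for instance, $\sigma := \frac{\alpha}{3(r-s)}$ and $n_3 := 3s$ guarantees $n-s \geq \frac{2n}{3}$, hence $\sigma n \cdot \frac{r-s}{n-s} \leq \sigma \cdot \frac{3(r-s)}{2} = \frac{\alpha}{2} < \alpha$, completing the proof. There is no real obstacle here; this is a direct calculation exploiting the Pascal-rule telescoping, and the only thing to keep an eye on is ensuring the strict inequality by taking the constant $\sigma$ with a small enough safety margin below $\alpha/(r-s)$.
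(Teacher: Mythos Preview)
Your argument is correct and follows essentially the same route as the paper: both reduce \eqref{eq:secDecr} via Pascal's identity to the estimate $\binom{n-s}{r-s}-\binom{n-s-t}{r-s}<\alpha\binom{n-s}{r-s}$ (your telescoping sum $\sum_{i=1}^t\binom{n-s-i}{r-s-1}$ equals exactly this difference by the hockey-stick identity), and then finish with a crude bound. One small correction to your illustrative constants: the choice $n_3=3s$ is too small when $r>3s$, since for $n<r$ the binomial $\binom{n-s}{r-s}$ vanishes and the final strict inequality $\frac{\alpha}{2}\binom{n-s}{r-s}<\alpha\binom{n-s}{r-s}$ fails; taking $n_3:=\max\{3s,r\}$ fixes this.
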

\begin{proof}
It is equivalent to show
\begin{equation*}
\binom{n-s-t}{r-s} > (1-\alpha)\binom{n-s}{r-s},
\end{equation*}
which holds for $n_3 > \frac{r-1}{\sigma}$ and $\sigma$ such that $(1-2\sigma)^{r-s} > 1-\alpha$.
\end{proof}
We shall frequently use the following inequality, whose simple proof forms part of Lemma 12 in \cite{Keevash08}.
\begin{lemma}
\label{lem:ajutineq}
Suppose $G$ is an $r$-graph, $x$ is a vertex of $G$ and $1 \leq s \leq r-1$. Then
\begin{equation}
\label{eq:rankIneq}
\rank M_s^r(G) \geq \rank M_s^r(G - x)+\rank M_{s-1}^{r-1}(G / x).
\end{equation}
\end{lemma}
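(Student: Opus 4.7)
The plan is to exploit the natural block structure of $M_s^r(G)$ induced by the vertex $x$. I would partition the rows (edges of $G$) into those containing $x$ and those not containing $x$, and partition the columns ($s$-subsets of $V(G)$) in the same way. Under this partition, the block of entries indexed by edges $e \not\ni x$ and $s$-subsets $S \ni x$ is identically zero, since $S \subseteq e$ forces $x \in e$. So, after reordering rows and columns, $M_s^r(G)$ becomes block lower triangular of the form
\begin{equation*}
\begin{pmatrix} A & 0 \\ B & C \end{pmatrix},
\end{equation*}
where $A$ is the submatrix whose rows are edges of $G$ avoiding $x$ and columns are $s$-subsets of $V(G)$ avoiding $x$, while $C$ is the submatrix whose rows are edges containing $x$ and columns are $s$-subsets containing $x$.

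Next I would identify the two diagonal blocks with the claimed matrices. The block $A$ is exactly $M_s^r(G-x)$: the rows of $A$ correspond bijectively to the edges of $G-x$, the columns correspond bijectively to $s$-subsets of $V(G)-\{x\}$, and the $\{0,1\}$ entries are determined by the containment relation, which is the same relation defining $M_s^r(G-x)$. For the block $C$, the map $e \mapsto e \setminus \{x\}$ sends an edge containing $x$ in $G$ to an edge of $G/x$, and similarly $S \mapsto S \setminus \{x\}$ sends an $s$-subset containing $x$ to an $(s-1)$-subset of $V(G/x)$. Since $S \subseteq e$ iff $S \setminus \{x\} \subseteq e \setminus \{x\}$ when both contain $x$, these bijections identify $C$ with $M_{s-1}^{r-1}(G/x)$.

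Finally, I would invoke the standard linear-algebra fact that for a block lower triangular matrix the rank is at least the sum of the ranks of the diagonal blocks: one can pick maximal linearly independent sets of columns in $A$ and in $C$, and the corresponding columns of the full matrix remain linearly independent because any dependence relation, restricted to the upper block rows, involves only columns from the $A$-side (thanks to the zero block) and therefore must be trivial there, which in turn forces the $C$-side coefficients to vanish. This yields
\begin{equation*}
\rank M_s^r(G) \geq \rank A + \rank C = \rank M_s^r(G-x) + \rank M_{s-1}^{r-1}(G/x),
\end{equation*}
as desired. There is no real obstacle here; the only thing to double-check is the bookkeeping for the identifications of $A$ and $C$, which is straightforward.
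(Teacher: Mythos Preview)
Your proof is correct. The paper itself does not supply a proof of this lemma; it simply cites Keevash's Lemma~12 in \cite{Keevash08}, where exactly this block-triangular argument is carried out. So your approach matches the intended one.
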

Note that in Lemma \ref{lem:ajutineq}, if $G/x$ has no edges, then $G - x$ has the same edge set as $G$ and $\rank M_s^r(G - x) = \rank M_s^r(G)$. We will always assume that for an $r$-graph $G$ with no edges, $\rank M_s^r(G) = 0$.

We will now prove the following equivalent version of Theorem~\ref{thm:main}.
\begin{theorem}
\label{thm:mainNew}
For every $r > s \geq 1$, there exist positive constants $c_0 := c_0(r, s)$ and $n_0 := n_0(r, s)$ such that the following holds. For arbitrary integers 
$n$ and $t$ with $n\geq n_0$ and $1 \leq t \leq c_0n$ and $r$-graph 
$F$ on $n$ vertices, we have that
\begin{itemize}
\item[$(i)$] if $|F| < N(n, t, r, s)$, then $\rank M_s^r(\overline{F}) > \binom{n}{s}-t$
\item[$(ii)$] $|F| = N(n, t, r, s)$ and $\rank M_s^r(\overline{F}) \leq \binom{n}{s}-t$ if and only if $F$ is the upper $(r-s)$-shadow of  an $s$-star configuration of size $t$.
\end{itemize}
\end{theorem}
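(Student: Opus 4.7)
I would prove Theorem~\ref{thm:mainNew} by a double induction: the outer on $s$ (with base case $s=1$ handled either by direct analysis of the incidence matrix or by citing \cite{Ahls03}), and the inner on $n$. For the inductive step, fix $r > s \ge 2$ and assume that both (i) and (ii) hold for $(r-1, s-1)$ on $n-1$ vertices and for $(r, s)$ on at most $n-1$ vertices. Given an $r$-graph $F$ with $|F| \le N(n, t, r, s)$ and $\rank M_s^r(\overline{F}) \le \binom{n}{s} - t$, the goal is to force both inequalities to equality and to identify $F$ as the upper $(r-s)$-shadow of an $s$-star configuration of size $t$.

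Since $|F| \le N(n,t,r,s) \le t\binom{n-s}{r-s} \le c_0 n\binom{n-s}{r-s}$, picking $c_0 < \eps(r, s)$ makes the hypothesis of Theorem~\ref{thm:robust} fail, so there exists an $s$-set $S_0$ contained in at least $\alpha\binom{n-s}{r-s}$ members of $F$. Fix a non-trivial dependence sequence $\{\alpha_S\}_{S \in \binom{[n]}{s}}$ for $\overline{F}$ and its associated $s$-graph $G'$. I would then search for a vertex $x$ with $E(G' - x) = \emptyset$, i.e., a vertex contained in every edge of $G'$. If no such $x$ exists, then $G'$ has no common vertex and must contain many $(r+s-2)$-semistars; Observation~\ref{obs:semistar} translates these into many distinct edges of $F$, which together with Lemma~\ref{lem:Nspeed} is shown to contradict the bound $|F| \le N(n, t, r, s)$.

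Once such $x$ is fixed, Observation~\ref{obs:t} supplies a non-trivial dependence sequence $\{\alpha_{\{x\}\cup S'}\}_{S' \in \binom{[n]\setminus\{x\}}{s-1}}$ for $\overline{F/x}$, so the rank deficiency $t_1$ of $M_{s-1}^{r-1}(\overline{F/x})$ satisfies $t_1 \ge 1$. Lemma~\ref{lem:ajutineq} yields
\[
\binom{n}{s} - t \;\ge\; \binom{n-1}{s} - t_2 + \binom{n-1}{s-1} - t_1,
\]
so $t_1 + t_2 \ge t$, where $t_2$ is the rank deficiency of $\overline{F-x}$. Lemmas~\ref{lem:Nspeed} and~\ref{lem:Ncomp} exclude values of $t_1$ or $t_2$ falling outside the inductive range $c_0(n-1)$, since any such excess would already force $|F| > N(n, t, r, s)$. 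The inductive hypothesis combined with \eqref{eq:mono} then delivers
\[
|F/x| \;\ge\; N(n-1, t_1, r-1, s-1) \;=\; N(n, t_1, r, s), \qquad |F - x| \;\ge\; N(n-1, t_2, r, s).
\]

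A short manipulation based on the expression \eqref{eq:redef} shows that
\[
N(n, t_1, r, s) + N(n-1, t_2, r, s) \;\ge\; N(n, t, r, s),
\]
with equality exactly when $t_1 = 1$ and $t_2 = t - 1$; this proves (i) and isolates the equality case. For (ii), the inductive equality descriptions identify $F/x$ as the upper $(r-s)$-shadow of a single $(s-1)$-set $T \subseteq [n] \setminus \{x\}$ and $F - x$ as the upper $(r-s)$-shadow of an $(s-1)$-star configuration of size $t - 1$; consistency forces these to share the common centre $T$, and $F$ then reconstructs as the upper shadow of the $s$-star configuration $\{T \cup \{v\} : v \in L\}$ with $|L| = t$ and $x \in L$. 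The main obstacle is the star-structure step of the second paragraph: translating the non-existence of a vertex $x$ with $E(G' - x) = \emptyset$ into enough forced edges in $F$ to exceed the tight budget $N(n, t, r, s)$ requires delicate counting of semistars, because distinct semistars may force the same $r$-edge and the budget $N(n,t,r,s)$ is sharp up to lower-order terms. A secondary difficulty is checking the gluing consistency of the two inductive equality structures in the uniqueness claim of~(ii).
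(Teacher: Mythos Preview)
Your overall strategy---double induction and splitting via Lemma~\ref{lem:ajutineq} at a well-chosen vertex $x$---parallels the paper's, but your choice of $x$ (a common vertex of the dependence graph $G'$) and your inner induction parameter ($n$ rather than the paper's $\Lambda(F,s)$) create a genuine error that the paper's choices avoid.

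The concrete mistake is your equality claim. You assert that
\[
N(n,t_1,r,s)+N(n-1,t_2,r,s)\ \ge\ N(n,t,r,s)
\]
with $t_1+t_2\ge t$ and $t_1\ge 1$ has equality exactly at $(t_1,t_2)=(1,t-1)$. This is false: equality also holds at $(t_1,t_2)=(t,0)$, and this is precisely what the extremal configuration produces with your choice of $x$. Indeed, if $F$ is the upper $(r-s)$-shadow of an $s$-star $\mathcal S$ with $(s-1)$-centre $C$, and you fix a \emph{generic} dependence sequence (say one supported on all of $\mathcal S$), then the only common vertices of $G'$ lie in $C$. For $x\in C$ every edge of $F$ contains $x$, so $F-x=\emptyset$ and $t_2=0$, while $F/x$ is the upper shadow of an $(s-1)$-star of size $t$, giving $t_1=t$. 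Your case analysis therefore misses the very equality configuration it is meant to characterise, and the gluing argument you sketch afterwards (which presupposes $t_1=1$) does not apply. Which equality case occurs depends on which dependence sequence you fixed and which common vertex you then picked, so a correct argument along your lines would have to treat both; you cannot simply declare one of them impossible.

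The paper sidesteps both this ambiguity and your acknowledged ``main obstacle'' by choosing $x$ to be a vertex of \emph{maximum degree in $F$}, not a structural vertex of $G'$, and by inducting on the parameter $\Lambda(F,s)$ that counts how many such high-degree vertices must be peeled before Theorem~\ref{thm:robust} applies. The base case $\Lambda=0$ is then immediate from Theorem~\ref{thm:robust}, and the inductive step is a clean two-case split on whether $\deg_F(x)\ge\binom{n-s}{r-s}$; the equality analysis in~(ii) reduces to the single case $p=t$ (all of $F$ through $x$), and no semistar counting is needed.
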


{\bf Remark.} Observe that part $(i)$ implies that if $F$ is the upper $(r-s)$-shadow of  an $s$-star configuration of size $t$, then $\rank M_s^r(K_n^r - F) = \binom{n}{s}-t$.  Indeed, $|F| = N(n, t, r, s)$, hence 
deleting any one set from
$F$ will make  the rank strictly larger than $\binom{n}{s}-t$, but the addition of a single row can not increase the rank by more than one.

\medskip

{\bf Remark.}
Note that Theorem~\ref{thm:mainNew} indeed implies Theorem~\ref{thm:main} with the same constants $c_0$ and $n_0$. 
Let  $n \geq n_0, 1 \leq t \leq c_0n$ and $G$ be any $r$-graph on $[n]$ with $\rank M_s^r(G) \leq \binom{n}{s} - t$. 
We define $F= \overline{G}$. 
By part $(i)$ of  Theorem~\ref{thm:mainNew}, we must then have $|F| \geq N(n, t, r, s)$, so
$\rex (n,t,r,s) \leq \binom{n}{r} - N(n,t,r,s)$. 
Part $(ii)$ shows that $\rex (n,t,r,s) = \binom{n}{r} - N(n,t,r,s)$ and the
unique family $G$ with $e(G)=\rex(n,t,r,s)$ and $\rank M_s^r(G) \leq \binom{n}{s} - t$ is
the complement of the upper $(r-s)$-shadow of an $s$-star configuration of size $t$.

\begin{proof}[Proof of Theorem~\ref{thm:mainNew}]
Recall first that we have already seen the ``if'' part of $(ii)$ in the introduction: The upper $(r-s)$-shadow of an $s$-star configuration 
of size $t$ has size $N(n,t,r,s)$ and 
for its complement $G=G(n,t,r,s)$ (cf. the discussion before Theorem 5)
$\rank M_s^r(G) \leq \tbinom{n}{s} -t$, since the matrix contains $t$ 
$0$-columns.

For $(i)$ and the ``only if'' part of $(ii)$ we apply double induction, 
first on $s$ and then on a parameter measuring how far the hypergraph is from Theorem~\ref{thm:robust} being applicable. For the second induction
we need to prove a somewhat more technical statement requiring a bit of preparation.

Let $\alpha := \alpha(r, s)$ be the constant given by Theorem~\ref{thm:robust}; we may assume that $\alpha < 1$. 
For every $r$-graph $F'=(V,E)$, we define 
$\Lambda(F', s)$ as 
the smallest integer $\ell\geq 0$, such that 
there exists a (possibly empty) sequence $v_1, \ldots , v_\ell \in V$ of
the vertices, such that for every $i=1, \ldots , \ell$
$v_i$ is of maximum degree in $F_{i-1}=F-\{ v_1, \ldots , v_{i-1}\}$ and
the maximum degree of $F_\ell=F-\{ v_1, \ldots , v_{\ell}\}$ is at most 
$\alpha\binom{|V(F_\ell)|-s}{r-s}$. 
This function is finite, because the maximum degree of $K^r_{r-1}$ 
is $0=\alpha \tbinom{r-1-s}{r-s}$. 
By the definition of $\Lambda$, if $\Lambda(F', s) > 0$ then there exists a vertex
$x\in V(F')$ of maximum degree such that 
$$\Lambda(F',s) = \Lambda(F'-x, s) +1.$$ 
 The r\^ole of $\Lambda(F', s)$ will become clear in Claim~\ref{clm:mainLoop} below, where, intuitively, if zero, it will allow us to appeal to 
Theorem~\ref{thm:robust} immediately, and otherwise it measures 
``how far'' we are from being able to do so.

Let $r > s \geq 1$ be integers.
The base case $s = 1$ will be similar to the induction step, so we will treat them parallel, but will always clearly distinguish which case we deal with.
If $s \geq 2$ we assume that the induction hypothesis (i.e. Theorem~\ref{thm:mainNew}) holds for all $r'>s'$ with $1\leq s' < s$, in particular that 
the appropriate constants $n_0(r-1,s-1)$ and $c_0(r-1,s-1)$ 
exist.

For our technical claim we now define two positive 
constants $n':= n'(r, s)$ and $c' := c'(r, s)$ for every $r>s\geq 1$. 
In the definition we will need $\eps=\eps(r, s)$ and $n_1=n_1(r, s)$, the  two other constants (besides $\alpha$) 
as asserted by Theorem~\ref{thm:robust}, and $\sigma=\sigma(r, s, \alpha)$ and $n_3=n_3(r, s, \alpha)$ the constants in Lemma \ref{lem:Ncomp}. 
For $s = 1$, we set
\begin{align*}
  n' &= \max\{r+s+1, n_1, n_3\}, \\
  c' &< \min\{\eps, \sigma\}.
\end{align*}
For $s > 1$, we set
\begin{align*}
  n' &= \max\{r+s+1, n_1, n_3, n_0(r-1, s-1)+1\}, \\
  c' &< \min\{\eps, \sigma, \frac{1}{2}c_0(r-1, s-1)\}.
\end{align*}

Recall that $s$ is now fixed. The following claim is formulated using the constants defined above.
\begin{claim}
\label{clm:mainLoop}
Let $t\geq 0$ be an integer and $F $ be an $r$-graph on $n$ vertices with $n - \Lambda(F,s) \geq \max \left\{ n', \frac{t}{c'}\right\}$. 
Then 
\begin{itemize}
\item[$(i)$] if $|F| <  N(n, t, r, s)$, then $\rank M_s^r(\overline{F}) > \binom{n}{s}-t$.
\item[$(ii)$] 
if $|F| =N(n, t, r, s)$ and
$\rank M_s^r(\overline{F}) \leq \binom{n}{s}-t$ then $F$ is the upper $(r-s)$-shadow of an $s$-star configuration of size $t$.
\end{itemize}
\end{claim}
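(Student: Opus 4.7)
My plan is a double induction: an outer induction on $s$ using Theorem~\ref{thm:mainNew} for $(r-1,s-1)$ as hypothesis (with $s=1$ treated directly), and an inner induction on $\ell:=\Lambda(F,s)$. The workhorse is Lemma~\ref{lem:ajutineq} applied at a max-degree vertex, and Theorem~\ref{thm:robust} terminates the inner induction at its base.

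For the inner base case $\ell=0$, every $s$-set has codegree at most $\alpha\binom{n-s}{r-s}$ by definition of $\Lambda$, and $|F|<N(n,t,r,s)\le t\binom{n-s}{r-s}\le c'n\binom{n-s}{r-s}\le \eps n\binom{n-s}{r-s}$ since $c'<\eps$. Theorem~\ref{thm:robust} therefore forces $\rank M_s^r(\overline{F})=\binom{n}{s}$, proving (i); part (ii) is vacuous for $t\ge 1$ and trivial for $t=0$.

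For the inductive step $\ell\ge 1$, let $v$ be the first vertex of the greedy max-degree sequence realizing $\Lambda(F,s)$, so $\Lambda(F-v,s)=\ell-1$; minimality of $\Lambda$ also forces $d:=\deg_F(v)>\alpha\binom{n-s}{r-s}$. Assume for contradiction $\rank M_s^r(\overline{F})\le\binom{n}{s}-t$ and set $a:=\binom{n-1}{s}-\rank M_s^r(\overline{F-v})$ and $b:=\binom{n-1}{s-1}-\rank M_{s-1}^{r-1}(\overline{F/v})$, so that Lemma~\ref{lem:ajutineq} gives $a+b\ge t$. If $a\ge t$, the inner induction on $F-v$ with parameter $t$ yields $|F-v|\ge N(n-1,t,r,s)$, and the gap $N(n,t,r,s)-N(n-1,t,r,s)=\binom{n-s}{r-s}-\binom{n-s-t}{r-s}$ is at most $\alpha\binom{n-s}{r-s}<d$ when $c'$ is chosen $\le \alpha/(2(r-s))$, giving $|F|\ge|F-v|+d\ge N(n,t,r,s)$. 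If $a<t$, then $b\ge 1$; the inner induction gives $|F-v|\ge N(n-1,a,r,s)$, and the outer induction on $F/v$ (replaced for $s=1$ by direct inspection of the single column of $M_0^{r-1}(\overline{F/v})$) applied with parameter $\min(b,c_0(r-1,s-1)(n-1))$ yields $|F/v|\ge N(n,\min(\ldots),r,s)$ via \eqref{eq:mono}. When $b$ exceeds the outer induction's threshold $|F/v|$ alone exceeds $N(n,t,r,s)$ thanks to $c'<c_0(r-1,s-1)/2$; otherwise a term-by-term comparison using \eqref{eq:redef} gives $N(n-1,a,r,s)+N(n,b,r,s)\ge N(n,a+b,r,s)\ge N(n,t,r,s)$ precisely because $b\ge 1$. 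Either way we contradict $|F|<N(n,t,r,s)$, establishing (i).

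For part (ii) I would re-run the same case analysis while tracking equality: $|F|=N(n,t,r,s)$ forces equality throughout every inequality chain, so by the inductive uniqueness statement $F-v$ must be the upper $(r-s)$-shadow of an $s$-star of size $t$ on $[n-1]$, and the edges of $F/v$ must extend this configuration to the $s$-star of size $t$ on $[n]$. The main obstacle I anticipate is precisely this equality bookkeeping in (ii): one has to verify that the common $(s-1)$-set of the star determined by $F-v$ coincides with the $(s-1)$-set determined by $F/v$ together with $v$, and to rule out non-isomorphic extremal configurations of the same size. The case analysis for (i) is also delicate, chiefly in verifying applicability of both inductive hypotheses in every sub-case (in particular when $b$ straddles the $c_0(r-1,s-1)(n-1)$ threshold), but the authors' definitions of $\eps,\alpha,c',n'$ at the top of the argument leave enough slack to close every sub-case.
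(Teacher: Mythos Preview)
Your argument for part~(i) is correct and follows the paper's approach: the same double induction on $s$ and on $\Lambda(F,s)$, the base case via Theorem~\ref{thm:robust}, and Lemma~\ref{lem:ajutineq} applied at a maximum-degree vertex. Your case split on $a\ge t$ versus $a<t$ is only a minor reorganisation of the paper's split on whether $\deg_F(v)\ge\binom{n-s}{r-s}$; both routes rest on the term-by-term inequality $N(n-1,a,r,s)+N(n,b,r,s)\ge N(n,a+b,r,s)$ (valid exactly because $b\ge 1$), and Lemma~\ref{lem:Ncomp} disposes of your $a\ge t$ branch with a strict inequality.

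For part~(ii), however, your equality bookkeeping is wrong as stated and misses the decisive step. Tracking equality in your chain forces $a+b=t$ together with either $a=0$ or $b=1$; it does \emph{not} force $F-v$ to be the shadow of a size-$t$ star as you write. The $a=0$ sub-case is the good one: then $F-v$ is empty, every edge of $F$ passes through $v$, and the outer hypothesis applied to $F/v$ with parameter $t$ yields the $s$-star structure directly. The sub-case $b=1,\,a=t-1$ (with $t\ge 2$) resolves correctly when $s=1$, but for $s\ge 2$ it cannot be reconciled with the star conclusion and must instead be \emph{eliminated}. Here the outer hypothesis with parameter $1$ shows that $F/v$ is the upper shadow of a single $(s-1)$-set $S'$; choosing any $z\in S'$, every edge of $F$ through $v$ contains $z$, and since $F-v$ is a nonempty upper shadow on $n-1\ge r$ vertices, $z$ lies in at least one further edge, so $\deg_F(z)\ge |F/v|+1=\binom{n-s}{r-s}+1>\deg_F(v)$, contradicting the maximality of $v$. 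This max-degree contradiction---not a matching of $(s-1)$-sets---is the missing idea in your sketch of~(ii); the paper carries out exactly this argument in its Case~1, sub-case $p=1$.
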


Let us discuss shortly how this claim implies the induction statement for $s$.
Recall that $\gamma_0 = \gamma_0(r, s, \alpha) < \gamma_1=\gamma_1(r, s, \alpha)$ and $n_2=n_2(r, s, \alpha)$ are the constants given by Lemma \ref{lem:Nspeed}.
We set
\begin{align*}
n_0=n_0(r, s) &= \max\left\{\frac{n'}{1-\gamma_1}, n_2\right\}, \\
c_0=c_0(r, s) &= \min\{c'(1-\gamma_1), \gamma_0\}.
\end{align*}

Let  $n \geq n_0, 1 \leq t \leq c_0n$ and $F\subseteq \tbinom{[n]}{r}$ 
be from Theorem~\ref{thm:mainNew}. Then $n-n' \geq \gamma_1 n$.
We can also assume that 
$\Lambda(F, s) < \gamma_1 n$, as otherwise 
by the definition of $\Lambda$, \eqref{eq:redef}, and by Lemma \ref{lem:Nspeed},
$$|F| > \alpha \sum_{i=1}^{\gamma_1n} \binom{n-s+1-i}{r-s}=
\alpha N(n, \gamma_1n, r, s) > N(n, t, r, s).$$
Therefore $c'(n - \Lambda(F, s)) > c'(1-\gamma_1)n \geq c_0(r, s)n \geq t$, so the assumptions of Claim \ref{clm:mainLoop} are satisfied, hence 
$(i)$ and $(ii)$ both hold for $F$.

\begin{proof}[Proof of Claim \ref{clm:mainLoop}]
If $t=0$, then $(i)$ is vacuously true, while $(ii)$ holds because the
$\emptyset$ is the upper shadow of itself.

For $t\geq 1$ we prove Claim \ref{clm:mainLoop} by induction on $\Lambda=\Lambda(F, s)$. 
Let $F$ be an $r$-graph on $n$ vertices with $n-\Lambda(F,s) \geq \max \left\{ n', \frac{t}{c'}\right\}$ and $|F| \leq N(n,t,r,s)$.  
Set $G :=\overline{F}$. 

We first consider the base case $\Lambda(F, s) = 0$. Then the maximum degree of $F$ is at most $\alpha \binom{n-s}{r-s}$. So every $s$-subset of $V(G)$ is contained in at most $\alpha \binom{n-s}{r-s}$ elements of $F$. Also note that \eqref{eq:redef} implies
\begin{equation*}
|F| \leq N(n,t,r,s) \leq t\binom{n-s}{r-s} \leq c'n\binom{n-s}{r-s} \leq \eps n \binom{n-s}{r-s}.
\end{equation*}

As $n\geq n' \geq n_1$, the conditions of Theorem \ref{thm:robust} are satisfied. Hence the matrix $M_s^r(G)$ has rank $\tbinom{n}{s}$,
which proves both parts $(i)$ and $(ii)$ because $t\geq 1$.

Let us assume now that $\Lambda(F, s) \geq 1$ and the claim holds for hypergraphs $F'$ with $\Lambda(F', s) < \Lambda(F,s)$. 

Let $x \in [n]$ be a vertex of maximum degree in $F$, such that $\Lambda(F-x,s)=\Lambda(F,s) -1$. 
Hence $$|V(F-x)| - \Lambda(F-x,s) = n - \Lambda(F,s) \geq \max \left\{ n' , \frac{t}{c'}\right\},$$ so 
Claim \ref{clm:mainLoop}  can be applied to $F-x$ and any integer $t', 0\leq t' \leq t$ by the induction on $\Lambda$.

When $s>1$ we can also
apply the induction of Theorem~\ref{thm:mainNew}  for $s-1$ with
 $r-1> s-1$ and $F/x$, as  $|V(F/x)| \geq n'  -1\geq n_0(r-1,s-1)$ and 
for $n \geq 2$ by the  definition of $c'$ we have 
\begin{equation*}
t\leq c'n <  c_0(r-1, s-1)|V(F/x)| = c_0(r-1, s-1)(n-1).
\end{equation*}

We shall now distinguish two cases according to how large $\deg_F(x)$ is.

\medskip
\noindent\textbf{Case~$1$:} $\binom{n-s}{r-s} \leq \deg_F(x) \leq N(n, t, r, s)$.
\medskip

We first deal with the case $s=1$. Then $\deg_F(x) = \binom{n-1}{r-1}$, so $G / x$ is just the empty graph, and hence $\rank M^{r-1}_{0} (G/x) =0$. 
To estimate the rank of the matrix of $\rank M^r_1 (G-x)$ we use Claim~\ref{clm:mainLoop} for
$F-x$ and $t-1$. 
If $|F| < N(n,t,r,1)$, then
\begin{equation*}
|F - x| =|F| -\binom{n-1}{r-1} < N(n, t, r, 1) - \binom{n-1}{r-1} = N(n-1, t-1, r, 1),
\end{equation*}
and by part $(i)$ $\rank M_1^r(G - x) > (n-1) - (t-1) = n-t$.
Lemma \ref{lem:ajutineq} then implies $\rank M_1^r(G) = \rank M_1^r(G - x) > n-t$.
For part $(ii)$ let $|F|=N(n,t,r,1)$ and $\rank M_1^r(G) \leq n-t$.
Then $|F - x| = N(n-1, t-1, r, 1)$ and $\rank M_1^r(G - x)  \leq n-t$.
Hence by part $(ii)$ of  Claim~\ref{clm:mainLoop}, $F - x$ is the upper $(r-1)$-shadow of a $1$-star configuration ${\mathcal S}$ of size $t-1$. 
Because all $r$-subsets of $V(G)$ containing $x$ are edges of $F$, 
$F$ is the upper 
$(r-1)$-shadow of a $1$-star configuration of size $t$ (namely ${\mathcal S} \cup
\{\{ x\}\}$). This finishes the proof of Case 1 when $s=1$.

Now assume $s > 1$. Let $p, 1\leq p \leq t$ be the largest integer such that
\begin{equation*}
|F/x|=\deg_F(x) \geq \sum_{i=1}^{p} \binom{n-s+1-i}{r-s} = N(n, p, r, s) = N(n-1, p, r-1, s-1). 
\end{equation*}

Whenever $p < t$, we can apply part $(i)$ of 
Theorem~\ref{thm:mainNew} to $s-1 < r-1$, $F/x$ and $p+1$.  
Since  $|F/x| < N(n-1, p+1, r-1, s-1)$ by the definition of $p$, we conclude that
\begin{equation}
\label{eq:indR2}
\rank M_{s-1}^{r-1}(G / x) \geq \binom{n-1}{s-1} - p.
\end{equation}

Whenever $|F-x| < N(n-1, t-p, r, s)$ we can apply part $(i)$ of  Claim~\ref{clm:mainLoop} to $F - x$ and $t-p$ 
and conclude that 
\begin{equation}
\label{eq:g-x}
\rank M_s^r(G - x) >\binom{n-1}{s} - (t-p).
\end{equation}

If we assume $|F| <N(n,t,r,s)$ we certainly have $p<t$, as well as
\begin{align}
\label{eq:f-x}
|F - x| & = |F|-|F/x| < N(n,t,r,s) - N(n,p,r,s) =\sum_{i=p+1}^t \binom{n-s+1-i}{r-s} \nonumber \\
& \leq \sum_{i=1}^{t-p}\binom{n-s-i}{r-s} = N(n-1, t-p, r, s),
\end{align}
so both \eqref{eq:indR2} and \eqref{eq:g-x} apply.
Hence by Lemma \ref{lem:ajutineq} we obtain $\rank M_s^r(G) >\binom{n}{s} - t$ and the proof of our part $(i)$ is complete.

For part $(ii)$ we assume that $\rank M_s^r(G) \leq \binom{n}{s} - t$. 
Then $p=t$ or $|F-x|= N(n-1,t-p,r,s)$, otherwise both \eqref{eq:indR2} and \eqref{eq:g-x} apply, contradicting our rank assumption.

Assume first that $|F-x|= N(n-1,t-p,r,s)$. 
In this case we must also have equality everywhere in \eqref{eq:f-x}:
$|F/x|= N(n,p,r,s) = N(n-1,p,r-1,s-1)$ and $p=1$. 
We will arrive at a contradiction.

Applying Theorem~\ref{thm:mainNew} to $s-1 < r-1$, $F/x$ and $p$,
we get that $\rank M_{s-1}^{r-1}(G / x) \geq \binom{n-1}{s-1} - p$ and applying Claim~\ref{clm:mainLoop} to $F - x$ and $t-p$ we have
$\rank M_s^r(G - x) \geq\binom{n-1}{s} - (t-p)$. 
This means that, in order to avoid contradiction with $\rank M_s^r(G) \leq \binom{n}{s} - t$, we must have equalities in both rank-inequalities.
 Then by part $(ii)$ of our induction $F/x$ is the upper $(r-s)$-shadow of an $(s-1)$-star configuration of size $p$ and $F-x$ is the upper $(r-s)$-shadow 
of an $s$-star configuration of size $(t-p)$. 
Now we use that $p=1$: there must be a vertex 
$z\in V(G)\setminus \{ x\}$ which occurs in all  members of $F/x$.  Since $F-x$ is an upper shadow, the degree of $z$ in $F-x$ is at least $1$. 
Hence $z$ has degree at least $|F/x|+1 = \binom{n-s}{r-s}+1$ in $F$. 
As $p=1$, the maximum degree in $F$ should only be $N(n,1,r,s)=\binom{n-s}{r-s}$, a contradiction.

Assume now $p=t$. Then $\deg_F(x) = N(n,t,r,s)= |F|$, so
every edge of $F$ contains $x$. Hence $G - x \simeq K_{n-1}^r$, so by Gottlieb's Theorem we have
$$\rank M_s^r(G - x) = \binom{n-1}{s}$$  
as $n' > r+s$. 

We apply Theorem \ref{thm:mainNew} for $s-1$ with $r-1$, $F/x$, and $t$. By \eqref{eq:mono} $|F/x|=N(n-1,t,r-1,s-1)$, so we conclude that
\begin{equation}
\label{eq:indR}
\rank M_{s-1}^{r-1}(G / x) \geq \binom{n-1}{s-1} - t,
\end{equation}
with equality only if $|F/x| = N(n-1,t,r-1,s-1)=N(n, t, r, s)$ and $F/x$ is the upper $(r-s)$-shadow of an $(s-1)$-star configuration ${\mathcal S}$ of size $t$. By Lemma \ref{lem:ajutineq}, $$\rank M_s^r(G) \geq \rank M_s^r(G-x)+ \rank M_{s-1}^{r-1}(G / x) \geq \binom{n}{s} - t,$$ and for equality to hold we must have equality in \eqref{eq:indR}, that is 
$|F| = |F/x|= N(n, t, r, s)$ and $F$ is the upper $(r-s)$-shadow of an $s$-star configuration of size $t$ ($x$ appended to each member of ${\mathcal S}$). This finishes the proof in Case 1.

\medskip
\noindent\textbf{Case~$2$:} $\alpha\binom{n-s}{r-s} \leq \deg_F(x) < \binom{n-s}{r-s}$.
\medskip

In this case we show that if $|F| \leq N(n,t,r,s)$, then $\rank M_s^r(G) > \binom{n}{s} - t$, concluding the proof of Claim \ref{clm:mainLoop}. 

We first show that $\rank M_{s-1}^{r-1}(G/x) = \binom{n-1}{s-1}$.

If $s=1$, we have $\rank M_{0}^{r-1}(G / x) = 1$, since  
$\deg_F(x) < \binom{n-1}{r-1}$.

If $s>1$  we apply part $(i)$ of
Theorem~\ref{thm:mainNew} to $r-1, s-1$, $F/x$ and $t=1$.
Since $|F/x| = \deg_F(x) < N(n-1, 1, r-1, s-1)$ we 
conclude that $\rank M_{s-1}^{r-1}(G / x) = \binom{n-1}{s-1}$.

For $\rank M_s^r(G - x)$ we apply Claim \ref{clm:mainLoop} to $F - x$ and $t$.
Since $n' \geq n_3$ and $t \leq c'n < \sigma n$, we may apply Lemma \ref{lem:Ncomp} to obtain 
$|F - x| \leq N(n, t, r, s)-\alpha\binom{n-s}{r-s} < N(n-1, t, r, s)$. 
Hence we conclude that $\rank M_s^r(G - x) > \binom{n-1}{s} - t$.

Consequently by Lemma \ref{lem:ajutineq} we obtain $\rank M_s^r(G) > \binom{n}{s} - t$.
\end{proof}

By the remarks following the statement of Claim \ref{clm:mainLoop}, this implies Theorem~\ref{thm:mainNew}.
\end{proof}

\section{\large{Tightness of the main result}}

Fix $r$ and $s$ and suppose $n \geq n_0'(r, s)$ is large enough. 
Recall the definition of $t_{\textrm{max}} := t_{\textrm{max}}(n, r, s)$. It is the maximum value of $t \leq \binom{n}{s}$ such that for any $t' \leq t$, $\binom{n}{r}-\rex(n, t', r, s)$ equals $K(n, t', s, r-s)$, the lower bound given by the Kruskal-Katona theorem for the upper $(r-s)$-shadow of a collection of $s$-subsets of $[n]$ of size $t$.

We will show that $t_{\textrm{max}} < n - r - 1$. The first step is provided by the following lemma.

\begin{lemma}
\label{lem:optim}
Let $r > s \geq 1$. For any $n \geq r+2$ there exists an $r$-graph $R:=R(n, r, s)$ on $n$ vertices with $\rank M_s^r(R) \leq \binom{n}{s} - (n-r-1)$, but $\binom{n}{r}-|R| < N(n, n-(r+1), r, s)$. 
\end{lemma}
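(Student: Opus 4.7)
The plan is to construct $R := R(n, r, s)$ by induction on $s$, coning the construction for $(r-1, s-1)$ over a new vertex at each step.

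\emph{Base case} ($s=1$): I distinguish two sub-cases. If $r \geq 3$, take $R$ to be the ``vertex star'' $R := \{ e \in \tbinom{[r+2]}{r} : 1 \in e\}$, viewed as an $r$-graph on $[n]$ (so $\{r+3, \ldots, n\}$ are isolated). Writing out the column-dependence equations $a_1 + \sum_{v \in X} a_v = 0$ over all $X \in \tbinom{\{2,\ldots,r+2\}}{r-1}$ and subtracting pairs that differ in a single element shows that the only non-trivial dependence is $\sum_{v=2}^{r+2} c_v = (r-1) c_1$; combined with $n - r - 2$ zero columns from the isolated vertices, this gives $\rank M_1^r(R) = r + 1$, i.e.\ a rank drop of exactly $n - r - 1$, and $|R| = \tbinom{r+1}{r-1} = \tbinom{r+1}{2} > r+1 = \tbinom{n}{r} - N(n, n-r-1, r, 1)$ since $r \geq 3$. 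If $r = 2$, take $R := C_4$ together with $n-4$ isolated vertices: the incidence matrix has rank $3$ (standard for a bipartite connected graph on $4$ vertices), yielding the correct rank drop, while $|R| = 4 > 3$.

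\emph{Inductive step} ($s \geq 2$): Let $R' := R(n-1, r-1, s-1)$, which exists by induction since $n - 1 \geq (r-1) + 2$. Put $v_0 := n$, $V := [n-1]$, and define
\[
R := \tbinom{V}{r} \cup \{ \{v_0\} \cup e : e \in E(R') \}.
\]
Then $|R| = \tbinom{n-1}{r} + |R'|$, and induction together with \eqref{eq:mono} gives
\[
|R| > \tbinom{n-1}{r} + \tbinom{n-1}{r-1} - N(n-1, n-r-1, r-1, s-1) = \tbinom{n}{r} - N(n, n-r-1, r, s),
\]
as required. For the rank, split the rows of $M_s^r(R)$ into $\tbinom{V}{r}$-rows and $\{v_0\} \cup e$-rows, and the columns into $s$-sets $S \subseteq V$ and $s$-sets of the form $\{v_0\} \cup S'$ with $S' \in \tbinom{V}{s-1}$. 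Since edges of $\tbinom{V}{r}$ miss $v_0$, the upper-right block vanishes and
\[
M_s^r(R) = \begin{pmatrix} M_s^r(K_V^r) & 0 \\ M_s^{r-1}(R') & M_{s-1}^{r-1}(R') \end{pmatrix}.
\]
By Gottlieb's theorem the top-left block has full column rank $\tbinom{n-1}{s}$, and a standard block-triangular argument then yields $\rank M_s^r(R) = \tbinom{n-1}{s} + \rank M_{s-1}^{r-1}(R')$. Combined with the induction hypothesis $\rank M_{s-1}^{r-1}(R') \leq \tbinom{n-1}{s-1} - (n-r-1)$, this gives $\rank M_s^r(R) \leq \tbinom{n}{s} - (n-r-1)$, closing the induction.

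The main obstacle is the block-rank step: one has to verify that the full column rank of $M_s^r(K_V^r)$ forces any column dependence of $M_s^r(R)$ to have zero coefficients on the ``$S\subseteq V$'' group, so that the surviving relations are exactly the column dependences of $M_{s-1}^{r-1}(R')$. Once that is established, everything else is bookkeeping with binomial coefficients, the identity \eqref{eq:mono}, and the base-case calculations.
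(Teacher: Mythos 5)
Your construction and induction follow the paper's proof exactly: the base case is the vertex star in $K^r_{r+2}$ (equivalently, $K^r_{r+2}$ minus all edges inside a fixed $(r+1)$-set) for $r\ge 3$ and $C_4$ for $r=2$, padded with isolated vertices, and the inductive step cones $R(n-1,r-1,s-1)$ over a new vertex on top of $\binom{[n-1]}{r}$, with the edge count handled via \eqref{eq:mono} just as in the paper. The one genuine problem is your justification of the block-rank step. The claim that ``by Gottlieb's theorem the top-left block $M_s^r(K_V^r)$ has full column rank $\binom{n-1}{s}$'' is false in part of the range the lemma must cover: full column rank requires $\binom{n-1}{s}\le\binom{n-1}{r}$, i.e.\ $n-1\ge r+s$, whereas the lemma only assumes $n\ge r+2$. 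For instance, with $n=r+2$ and $s\ge 2$ Gottlieb gives $\rank M_s^r(K^r_{r+1})=\binom{r+1}{r}<\binom{r+1}{s}$. Consequently the exact equality $\rank M_s^r(R)=\binom{n-1}{s}+\rank M_{s-1}^{r-1}(R')$, which you single out as ``the main obstacle'' and derive from that full column rank, is not established in this range.

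Fortunately this is an overclaim rather than a fatal flaw: downstream you only use the inequality $\rank M_s^r(R)\le\binom{n-1}{s}+\rank M_{s-1}^{r-1}(R')$, and that needs no Gottlieb at all. The columns indexed by $s$-sets avoiding $v_0$ number $\binom{n-1}{s}$, while each column indexed by $\{v_0\}\cup S'$ vanishes on all rows coming from $\binom{V}{r}$ and agrees with the column of $S'$ in $M_{s-1}^{r-1}(R')$ on the remaining rows, so the span of this second column group has dimension exactly $\rank M_{s-1}^{r-1}(R')$; the bound then follows from subadditivity of the rank over the two column groups. This is precisely how the paper argues, and with this replacement your proof is complete. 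If you insist on the equality, you would have to either restrict to $n\ge r+s+1$ or argue that the matrix obtained by stacking the two left-hand blocks has full column rank, rather than appealing to the top-left block alone --- but neither is needed for the lemma as stated.
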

\begin{proof}
The proof of this lemma is partially based on a construction of Keevash from \cite{Keevash08}.

We prove the lemma by induction on $s$.

First assume $s=1$.

If $r=2$, we define $R(4, 2, 1) := C_4$. Then $R(4, 2, 1)$ verifies the conditions of the lemma.

If $r>2$, we define $R := R(r+2, r, 1)$ as the complete $r$-graph on the set $[r+2]$, from which all the edges spanned by the vertices $\{1, \ldots, r+1\}$ were removed. Then $\alpha_i := 1, 1 \leq i \leq r+1$, and $\alpha_{r+2} := -(r-1)$, is a dependence sequence for $R$, showing that $\rank M_1^r(R) < r+2$. As $\binom{r+2}{r}-|R| = \binom{r+1}{r} < \binom{r+1}{r-1} = N(r+2, 1, r, 1)$ by \eqref{eq:Ndef}, the graph $R$ also verifies the conditions of the lemma.

For any $n>r+2$, we let $R(n, r, 1)$ have the same edges as $R(r+2, r, 1)$, but extend the vertex set to $[n]$. Then $R(n, r, 1)$ has rank of the associated inclusion matrix at most $r+1$, and
\begin{equation*}
\binom{n}{r} - |R(n, r, 1)| < \binom{n}{r} - \binom{r+1}{r} = N(n, n - (r+1), r, 1).
\end{equation*}
Hence the claim holds for $s=1$.

Now assume $s>1$ and the claim holds for any pair $(s', r')$ with $s'+r' < s+r$.

We define the edge-set of $R(n, r, s)$ as the collection $\binom{[n-1]}{r}$, to which we add all sets $\{n\} \cup e$, where $e$ is an edge in $R(n-1, r-1, s-1)$. 

Note that
\begin{equation}
\label{eq:rkbound}
\rank M_s^r(R(n, r, s)) \leq \binom{n-1}{s} + \rank M_{s-1}^{r-1}(R(n-1, r-1, s-1)).
\end{equation}
Indeed, from the columns corresponding to the $s$-sets containing $n$ we can pick at most $\rank M_{s-1}^{r-1}(R(n-1, r-1, s-1))$ linearly independent ones, to which we may further add at most $\binom{n-1}{s}$ columns, to form a linearly independent collection of size $\rank M_s^r(R(n, r, s))$.

Therefore, 
\begin{equation*}
\binom{n}{s} - \rank M_s^r(R(n, r, s)) \geq \binom{n-1}{s-1} - \rank M_{s-1}^{r-1}(R(n-1, r-1, s-1)) \geq n - (r+1).
\end{equation*}

Furthermore, by construction,
\begin{equation*}
\binom{n}{r} - |R(n, r, s)| = \binom{n-1}{r-1} - |R(n-1, r-1, s-1)| < N(n-1, n - (r+1), r-1, s-1),
\end{equation*}
where the last inequality follows by the induction hypothesis. The proof now follows from \eqref{eq:mono}.
\end{proof}

We are now in position to prove Theorem \ref{thm:sharp}.

\begin{proof}[Proof of Theorem \ref{thm:sharp}]
Let $r > s \geq 1$. We set $n_0'(r, s) := r+2$ and let $n \geq n_0'(r, s)$.

We first claim that for $t=n-r-1$, the lower bound provided by the Kruskal-Katona theorem is still equal to $N(n, t, r, s)$. As $n-r-1 \leq n-s$, by \eqref{eq:tDecomp} we have $t = \binom{n-s}{n-s}+\ldots+\binom{n-s-t+1}{n-s-t+1}$. In the same manner as in the Introduction we obtain that the lower bound provided by the Kruskal-Katona theorem for the upper $(r-s)$-shadow of a collection of $s$-subsets of $[n]$ of size $t$ equals $\binom{n-s+1}{r-s+1}-\binom{n-s+1-t}{r-s+1} = N(n, t, r, s)$, hence the claim.

However, by Lemma \ref{lem:optim}, for $n \geq r+2$ and $t=n-r-1$, $\binom{n}{r}-\rex(n,t,r,s)$ is no longer equal to $N(n, t, r, s)$. Therefore $t_{\textrm{max}} < n-r-1$, as desired.
\end{proof}

\section{\large{Concluding remarks}}
\label{sec:remarks}

We close our discussion with several remarks and questions.

\subsection{\small{Rank-extremal function for graphs and other ranges of $t$}}
In the particular case of $s=1$ and $r=2$ the rank-extremal function as well 
as the rank-extremal graphs can be completely determined for all values of $t \leq n$. 
Indeed, since the rank of a graph $G$ is the sum of the ranks of its components, 
and a component $C$ of $G$ has full rank $|V(C)|$ if it is not bipartite  and has rank $|V(C)|-1$ otherwise, 
one obtains the following. For any graph $G$ on $n$ vertices, $\rank M_1^2(G) = n - b(G)$, where $b(G)$ is 
the number of bipartite components of $G$. With this characterization at hand, it is not 
difficult to compute $\rex(n,t,2,1)$ and the rank-extremal graphs: if $G$ is a rank-extremal graph with $n$ 
vertices and $\rank M^2_1(G)=n-t$, then $G$ consists of $t$ bipartite components and of 
possibly one non-bipartite complete subgraph. All one has to do now is to choose the sizes of components 
such that the number of edges is maximized. We omit the straightforward details, but describe briefly 
$\rex(n,t,2,1)$ for $n\ge 6$. Let $G$ be maximal such that $\rank M_1^2(G) \leq n - t$. Remove all isolated vertices from $G$ to form a new graph $H$. 
The structure of $H$ is now easy to describe. If $t < n-4$ then $H = K_{n-t}$. If $t = n-4$ 
then $H$ is either $K_{2, 3}$ or $K_4$, while if $t = n-3$, $H$ must be $C_4$. Finally, for $t = n-2$, $H$ is 
either $P_2$ or two disjoint edges, for $t = n-1$, $H$ is just $K_2$ and for $t=n$, $H$ is the empty graph. In particular, Theorem~\ref{thm:main} does not hold for $t = n-r-2$, as there are two extremal graphs. 

The case of $n=4, s=1, r=2$ and $C_4$ also shows that Theorem~\ref{thm:main} may fail to hold for small values of $n$, 
even under the additional assumption that $n \geq r+s$. Hence the constant $n_0$ in the theorem statement is necessary. 
It would be interesting to determine the best constant $c_0$ in Theorem~\ref{thm:main}. Our argument only shows it is at most $1$.

Using Theorem~\ref{thm:keevash}, we can determine $\rex(n, t, r, s)$ asymptotically.  
Indeed, for any large enough real $x\in[0,n]$, we have that $\binom{x}{r} \geq \rex(n, \binom{n}{s} - \binom{x}{s}, r, s) > \binom{x-1}{r}$. The upper bound follows from the definition and Theorem~\ref{thm:keevash}, while the lower bound follows by considering the $r$-graph $G$ on $[n]$ and edge set $\binom{[\left \lfloor x \right \rfloor]}{r}$. Consequently the rank-extremal function is asymptotically like $x^r / r!$. However, the error of this approximation becomes very large when $x$ approaches $n$, exactly the case covered by our Theorem~\ref{thm:main}. In general, the exact values of the rank-extremal function as well as the structure of the rank-extremal hypergraphs are still an open problem.

\subsection{\small{Constructive proof of Theorem~\ref{thm:main} for $s=1$}}
The proof of Theorem~\ref{thm:main} is non-constructive in the sense that we only show that 
for an $r$-graph $G$ with more than $\rex(n,t,r,s)$ edges there exist some 
$\tbinom{n}{s}-t+1$ linearly independent rows in the inclusion matrix $M^r_s(G)$. 
In the case $t=s=1$ we can show that $M^r_s(G)$ has full (column) rank by finding 
a particular subgraph of $G$ with $n$ edges, whose rows are linearly independent, as follows. 

Let $H'$ be an $(r-1)$-tight Hamilton cycle on $[n']$ with $n'\equiv r-1\mod r$ and $n'\ge n-r+1$ (this tight Hamilton 
cycle has its edges being any $r$ consecutive elements from $[n']$ modulo $n'$).
The $r$-graph $H$ on the vertex set $[n]$ is obtained by adding to $H'$ $(n-n')$ new vertices $n'+1$, 
\ldots, $n$ and adding new disjoint edges $e_{n'+1}$, \ldots, $e_n$ such that $e_i$ contains vertex $i$ and 
$(r-1)$ further consecutive vertices from the cycle $H'$.
 It is not difficult to see that $M^r_s(H)$ has full rank $n$ (the inclusion matrix of the 
$(r-1)$-tight Hamilton cycle $H'$ on $n'$ vertices has full rank and adding each of the 
edges $e_i$ increases the rank each time by $1$). It follows then from the work 
in~\cite{GlPeWe12} on the extremal number of Hamilton cycles that \emph{any} $r$-graph $G$ on $n$ vertices with $\rex(n,1,r,1)+1=\tbinom{n}{r}-\tbinom{n-1}{r-1}+1$
edges must contain a copy of $H$.

This can be seen as the generalization of the graph argument described above: a graph with $\tbinom{n-1}{2}+1$
edges and $n$ vertices ($n\ge 5$) is clearly connected and not bipartite, yielding that its inclusion matrix has full column rank. It would be interesting to investigate what structures will replace the bipartite components for $r$-graphs when $t>1$ and to look for minimal structures whose inclusion matrices have full rank in the case $r>s>1$.

\subsection{\small{Random inclusion matrices}}
Lots of research in random graphs dealt recently with questions of transferring 
results from extremal combinatorics to a probabilistic setting. Given a random $r$-graph $G(n,p)$ which is the product probability space 
$\{0,1\}^{\tbinom{n}{r}}$ where $\prob(G)=p^{|G|}(1-p)^{\tbinom{n}{r}-|G|}$. The basic question one might ask 
is the following: what is the threshold for the property that $\rank M^r_s(G(n,p))=\tbinom{n}{s}$? 

In the graph case ($r=2$, $s=1$), the inclusion matrix $M^2_1(G(n,p))$ gets full column rank, exactly at the 
very moment when $G(n,p)$ becomes connected (since it contains a connected 
spanning subgraph with an odd cycle and with $n$ edges  - and its
 inclusion matrix already has full rank).  Further, the threshold $p$ is sharp and equals $\tfrac{\ln n}{n}$. 
For $r>s\ge 1$, $\rank M_s^r(G(n,p))$
stays clearly below $\tbinom{n}{s}$ if there is a zero column. 
It is further easy to see that the threshold $p$ for the property that $M^r_s(G(n,p))$ has no zero columns is precisely at $\tfrac{s (r-s)!\ln n}{n^{r-s}}$, 
which is a standard first moment-second moment calculation. This gives 
us a lower bound on the threshold for $M^r_s(G(n,p))$ to possess full rank. 

On the other hand, it is not hard to prove the upper bound on $p=O(\tfrac{\ln^2 n}{n^{r-s}})$ which is only slightly apart from 
the lower bound. Here is a rough sketch. First, 
from Theorem~\ref{thm:keevash} and some straightforward estimates for binomial coefficients one can obtain the following lemma.
\begin{lemma}\label{lem:many_edges}
 If $G$ is an $r$-uniform hypergraph with $n$ vertices and at least
$\tbinom{n}{r}-tn^{r-s}$ edges, then 
\[
\rank M_s^r(G)\ge \tbinom{n}{s}-c\cdot t,
\]
 where $c>0$ depends on $r$ only. 
\end{lemma}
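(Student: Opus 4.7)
\emph{Proof plan.} The plan is to combine Keevash's theorem (Theorem~\ref{thm:keevash}) with elementary bounds relating binomial coefficients of different orders. Given $G$ with $|G| \geq \binom{n}{r} - tn^{r-s}$, first choose the real number $x \in [r-1, n]$ with $|G| = \binom{x}{r}$; this is legitimate since $y \mapsto \binom{y}{r}$ is continuous and strictly increasing on $[r-1,\infty)$. Assuming $|G| \geq n_{r,s}$ (where $n_{r,s}$ is the threshold in Keevash's theorem), Theorem~\ref{thm:keevash} yields
\[
\rank M_s^r(G) \;\geq\; \binom{x}{s},
\]
so the task reduces to showing $\binom{n}{s} - \binom{x}{s} \leq c \cdot t$ for some $c = c(r)$.

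To estimate $\binom{n}{s} - \binom{x}{s}$, I would apply the mean value theorem twice to the polynomial $\binom{y}{k}$. Working in the regime $x \geq n/2$ (which corresponds to $t$ being small compared to $\binom{n}{s}$), the derivative $\frac{d}{dy}\binom{y}{r}$ is of order $n^{r-1}$ on the interval $[x,n]$, so the hypothesis $\binom{n}{r} - \binom{x}{r} \leq t n^{r-s}$ forces
\[
n-x \;\leq\; \frac{C_1 \cdot t}{n^{s-1}}
\]
for some constant $C_1 = C_1(r)$. Since $\frac{d}{dy}\binom{y}{s}$ is at most $n^{s-1}/(s-1)!$ on the same interval, the mean value theorem again gives
\[
\binom{n}{s} - \binom{x}{s} \;\leq\; (n-x) \cdot \frac{n^{s-1}}{(s-1)!} \;\leq\; C_2 \cdot t
\]
for some $C_2 = C_2(r)$, which is the claimed bound.

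Finally, the edge cases where either $x < n/2$ or $|G| < n_{r,s}$ both force $t$ to be at least a positive constant (depending on $r$) times $n^s \geq s! \binom{n}{s}$, so after possibly enlarging $c$ we have $\binom{n}{s} - c \cdot t \leq 0$ and the conclusion is vacuous. I do not expect any genuine obstacle here: the heart of the argument is simply the observation that when $x$ is close to $n$, the difference $\binom{n}{r} - \binom{x}{r}$ is roughly $n^{r-s}$ times the difference $\binom{n}{s} - \binom{x}{s}$, which matches exactly the normalization in the hypothesis. The only care required is bookkeeping of the various constants and regimes so that they combine to a single $c = c(r)$ valid for all $n$ and $t$.
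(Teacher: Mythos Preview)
Your proposal is correct and takes essentially the same approach as the paper, which in fact does not give a detailed proof but merely states that the lemma follows ``from Theorem~\ref{thm:keevash} and some straightforward estimates for binomial coefficients.'' Your use of the mean value theorem to compare $\binom{n}{r}-\binom{x}{r}$ with $\binom{n}{s}-\binom{x}{s}$ is a clean realization of exactly those straightforward estimates, and your handling of the degenerate regimes (small $x$ or $|G|<n_{r,s}$) by absorbing them into the constant $c$ is the natural way to finish.
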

Next, suppose that we are given $G$ with $n$ vertices and of the rank  $\tbinom{n}{s}- ct-1$. Then, by Lemma~\ref{lem:many_edges},
we obtain $\prob\left[\rank M^r_s(G\cup G(n,p))<\tbinom{n}{s}-c\cdot t\right]\le (1-p)^{tn^{r-s}}=\exp(-ptn^{r-s})$. Thus, starting with 
the empty hypergraph on $n$ vertices, and performing at most $\tbinom{n}{s}$ times multi-round  exposure, we obtain
\begin{equation}\label{eq:finalsum}
 \prob\left[\rank M^r_s(\cup_{i} G(n,p_i))<\tbinom{n}{s}\right]\le \sum_{i=1}^{\tbinom{n}{s}} \exp(-p_i i n^{r-s}).
\end{equation}
Therefore, it is enough to choose $p_i=\tfrac{C\log n}{i n^{r-s}}$ for $C$ sufficiently large, so that the sum in~\eqref{eq:finalsum} is $o(1)$.
 Moreover, since $\sum_{i=1}^{\tbinom{n}{s}}p_i=O(\tfrac{\log^2n}{n^{r-s}})$, 
the upper bound on the threshold for $M^r_s(G(n,p))$ to possess full rank follows.

Finally, it should be mentioned, that  Friedgut's theorem~\cite{Fri99} implies that the property $\rank M^r_s(G(n,p))=\tbinom{n}{s}$ has a sharp threshold, 
since the phase transition happens between $\tfrac{\ln n}{n^{r-s}}$ and $\tfrac{\ln^2 n}{n^{r-s}}$ and thus $p$ does not have a rational exponent. It would be interesting to determine the precise threshold for the property above. 
 In fact, we believe it should match the lower bound.
\subsection{\small{Local resilience of the rank}}

The study of graph resilience was initiated by Sudakov and Vu in \cite{SudVu08}. In view of Theorem \ref{thm:robust}, one can formulate a similar question about the rank of $K_n^r$. 

Let $r \geq s \geq 0$. If $F \subseteq \binom{[n]}{r}$ and $S \in \binom{[n]}{s}$ we define $\deg_F(S)$ as the number of elements of $F$ containing $S$. Then one has the following problem.

\begin{problem}
For $n$ sufficiently large, determine the maximum value $m(n, r, s)$ such that for any $F \subseteq \binom{[n]}{r}$ with $\deg_F(S) \leq m(n, r, s), \forall S \in \binom{[n]}{s}$, we have $\rank M_s^r(K_n^r - F) = \binom{n}{s}$.
\end{problem} 

Theorem \ref{thm:robust} shows that for $r \geq 2s - 1$ we have $m(n, r, s) \geq cn^{r+1-2s}$, for some $c > 0$.

\subsection{\small{Fields of finite characteristic}}
It turns out that the inclusion matrix of the complete hypergraph does not necessarily possess the full rank. In the case of $\GF(2)$ the rank was computed 
 by Linial and Rothschild~\cite{LinRot81} and in the case of $\GF(p)$ by Wilson~\cite{Wilson_m}, see also Frankl~\cite{Fra90}.

\section*{Acknowledgements} We would like to thank Nati Linial for his interest in this project and for  
bringing the results in finite fields to our attention.

\providecommand{\bysame}{\leavevmode\hbox to3em{\hrulefill}\thinspace}
\providecommand{\MR}{\relax\ifhmode\unskip\space\fi MR }
\providecommand{\MRhref}[2]{%
  \href{http://www.ams.org/mathscinet-getitem?mr=#1}{#2}
}
\providecommand{\href}[2]{#2}


\begin{thebibliography}{10}

\bibitem{Ahls03}
Rudolf Ahlswede, Harout Aydinian, and Levon Khachatrian, \emph{Maximum number
  of constant weight vertices of the unit $n$-cube contained in a
  $k$-dimensional subspace}, Combinatorica \textbf{23} (2003), no.~1, 5--22.

\bibitem{Chow2010}
Ameera Chowdhury and Bal\'asz Patk\'os, \emph{Shadows and intersections in
  vector spaces}, J. Combin. Theory Ser. A \textbf{117} (2010), no.~8,
  1095--1106.

\bibitem{Fra90}
Peter Frankl, \emph{Intersection theorems and mod {$p$} rank of inclusion
  matrices}, J. Combin. Theory Ser. A \textbf{54} (1990), no.~1, 85--94.

\bibitem{FranklToku91}
P\'eter Frankl and Norihide Tokushige, \emph{The {K}ruskal–{K}atona theorem,
  some of its analogues and applications}, Extremal Problems for finite sets
  (P\'eter Frankl, Zolt\'an F{\"u}redi, Gyula O.~H. Katona, and Desz\H{o}
  Mikl\'os, eds.), Bolyai Soc. Math. Stud., vol.~3, J\'anos Bolyai Mathematical
  Society, 1991, pp.~229--250.

\bibitem{Fri99}
Ehud Friedgut, \emph{Sharp thresholds of graph properties, and the k-sat
  problem}, J. Amer. Math. Soc. \textbf{12} (1999), no.~4, 1017--1054.

\bibitem{GlPeWe12}
Roman Glebov, Yury Person, and Wilma Weps, \emph{On extremal hypergraphs for
  {H}amiltonian cycles}, European J. Combin. \textbf{33} (2012), no.~4,
  544--555.

\bibitem{Gottlieb}
Daniel~Henry Gottlieb, \emph{A certain class of incidence matrices}, Proc.
  Amer. Math. Soc. \textbf{17} (1966), no.~6, 1233--1237.

\bibitem{Kat68}
Gyula Katona, \emph{A theorem of finite sets}, Theory of graphs (Proc. Colloq.,
  Tihany, 1966), Academic Press, New York, 1968, pp.~187--207.

\bibitem{Keevash08}
Peter Keevash, \emph{Shadows and intersections: stability and new proofs}, Adv.
  Math. \textbf{218} (2008), no.~5, 1685--1703.

\bibitem{Addendum}
\bysame, \emph{Addendum to \textit{{S}hadows and intersections: stability and
  new proofs}}, January 2010, Available at
  \url{http://www.maths.qmul.ac.uk/~keevash/papers/kk-addendum.pdf}.

\bibitem{Keevash11}
\bysame, \emph{{H}ypergraph {T}ur{\'a}n problems}, Surveys in {C}ombinatorics,
  London Math. Soc. Lecture Notes Series, Cambridge University Press, 2011,
  pp.~83--140.

\bibitem{Kru63}
Joseph~B. Kruskal, \emph{The number of simplices in a complex}, Mathematical
  Optimization Techniques, Univ. of California Press, Berkeley, Calif., 1963,
  pp.~251--278.

\bibitem{LinRot81}
Nathan Linial and Bruce~L. Rothschild, \emph{Incidence matrices of subsets---a
  rank formula}, SIAM J. Algebraic Discrete Methods \textbf{2} (1981), no.~3,
  333--340.

\bibitem{Longstaff77}
William~E. Longstaff, \emph{Combinatorial solution of certain systems of linear
  equations involving $(0, 1)$ matrices}, J. Austral. Math. Soc. \textbf{23}
  (1977), no.~3, 266--274.

\bibitem{Lovasz79}
L{\'a}szl{\'o} Lov{\'a}sz, \emph{Combinatorial problems and exercises},
  North-Holland Publishing Co., Amsterdam, 1979.

\bibitem{Odlyzko81}
Andrew~Michael Odlyzko, \emph{On the ranks of some $(0, 1)$-matrices with
  constant row sums}, J. Austral. Math. Soc. \textbf{31} (1981), no.~2,
  193--201.

\bibitem{SudVu08}
Benny Sudakov and Van Vu, \emph{Local resilience of graphs}, Random Struct.
  Alg. \textbf{33} (2008), no.~4, 409--433.

\bibitem{Wilson_m}
Richard~M. Wilson, \emph{The {S}mith normal form of inclusion matrices}, 1980,
  manuscript.

\end{thebibliography}
\end{document}